\newtheorem{theorem}{Theorem}
\newtheorem{lemma}[theorem]{Lemma}
\newtheorem{corollary}[theorem]{Corollary}
\theoremstyle{definition}
\newtheorem{remark}[theorem]{Remark}
\DeclareMathOperator{\sign}{sign}
\newcommand{\norm}[1]{\left\lVert#1\right\rVert}
\newcommand{\set}[1]{\left\{#1\right\}}
\numberwithin{equation}{section}
\numberwithin{theorem}{section}
\newenvironment{OMabstract}{\noindent\textbf{Abstract.} }{\medskip}
\newenvironment{OMsubjclass}{\noindent\textbf{Mathematics Subject Classification (2020):} }{\medskip}
\newenvironment{OMkeywords}{\noindent\textbf{Keywords:}  }{\medskip}
\begin{document}

\author{Filipp Buryak, Yuliya Mishura}
\title{Convexity and robustness of the R\'{e}nyi entropy}
\maketitle

\begin{OMabstract}
We study convexity properties of R\'{e}nyi entropy as function of $\alpha>0$ on finite alphabets. We also describe  robustness of the R\'{e}nyi entropy on finite alphabets, and it turns out that the rate of respective convergence depends on initial alphabet. We  
establish convergence of the disturbed entropy when the initial distribution is uniform   but the number of events increases to $\infty$ and  prove that limit of R\'{e}nyi entropy of binomial distribution is equal to R\'{e}nyi entropy of Poisson distribution.
\end{OMabstract}

\begin{OMkeywords}
Discrete distribution, R\'{e}nyi entropy, Convexity.
\end{OMkeywords}

\begin{OMsubjclass}
60E05, 94A17.
\end{OMsubjclass}

\section{Introduction}
Let $(\Omega, \mathfrak{F}, \mathbf{P})$ be a probability space supporting all distributions considered below.  For any $N\ge 1$ introduce the family of discrete distributions  $p=(p_1,p_2,\ldots,p_N)$   with probabilities  $$p_i\geq0, 1\le i\le N, N\ge 1,\;  p_1+...+p_N=1.$$ In the present paper we  investigate some properties  of the R\'{e}nyi entropy, which was proposed by R\'{e}nyi in \cite{Renyi},
\begin{align*}
\mathcal{H}_\alpha(p) = \frac{1}{1-\alpha}\log\left( \sum_{k=1}^{N} p_k^{\alpha}\right), \; \alpha > 0, \; \alpha \neq 1,
\end{align*}
including its limit value as $\alpha\rightarrow 1$, i.e.,  the Shannon entropy
\begin{align*}
\mathcal{H}(p)=-\sum_{k=1}^{N} p_k\log(p_k).
\end{align*}
Due to this continuity, it is possible to put $\mathcal{H}_1(p)=\mathcal{H}(p)$.
We consider the R\'{e}nyi entropy as a functional  of various parameters. The first approach is to fix the distribution and consider $\mathcal{H}_\alpha(p)$ as the function of $\alpha>0.$ Some of the properties of $\mathcal{H}_\alpha(p)$ as the function of $\alpha>0$ are well known. In particular, it is   known that $\mathcal{H}_\alpha(p)$ is continuous and non-increasing in $\alpha\in(0,\infty)$, $\lim_{\alpha\to 0+} \mathcal{H}_\alpha(p)=\log m$, where $m$ is the number of non-zero probabilities, and $\lim_{\alpha\to +\infty}\mathcal{H}_\alpha(p)=-\log \max_{k} p_k$. However, for the reader's convenience, we provide the short proofs of this and some other simple statements in the Appendix. One can see that these properties of the entropy itself and its first derivative are common for all finite distributions. Alao, it is  known that R\'{e}nyi entropy is Schur concave as a function of distribution vector, that is
\begin{align*}
(p_i-p_j)\left(\frac{\partial\mathcal{H}_\alpha(p)}{\partial p_i}-\frac{\partial\mathcal{H}_\alpha(p)}{\partial p_j}\right) \leq 0, \; i\neq j.
\end{align*}
Some additional    results such as lower bounds on the difference in R\'{e}nyi entropy for distributions defined on countable alphabets could be found in \cite{Siu}. Those results usually use R\'{e}nyi divergence of order $\alpha$ of a distribution $P$ from a distribution $Q$
\begin{align*}
D_\alpha\left(P||Q\right)=\frac{1}{\alpha-1}\log\left(\sum_{i=1}^{N}\frac{p_i^\alpha}{q_i^{\alpha-1}}\right),
\end{align*}
which is very similar to Kullback-Leibler divergence. Some of R\'{e}nyi divergences most important properties were reviewed and extended in \cite{RenyiDiver}. R\'{e}nyi divergence for most commonly used univariate continuous distributions could be found in \cite{CommonDiver}. R\'{e}nyi entropy and divergence is widely used in majorization theory \cite{DiverMajor, MajorApp}, statistics \cite{BayesBound, StatMech}, information theory \cite{Siu, RenyiDiver, Compl} and many other fields. Boundedness of R\'{e}nyi entropy was shown in \cite{Bound} for discrete log-concave distributions depending on it's variance. There are other operational definitions of R\'{e}nyi entropy given in \cite{PH}, which are used in practice. However, in the present paper we restrict  ourselves with standard
R\'{e}nyi entropy and  go a step ahead in comparison with standard properties, namely, we investigate convexity of the R\'{e}nyi entropy with the help of the second derivative.  It turned out that from this point of view, the situation is much more interesting and  uncertain in comparison with the behavior of the 1st derivative, and crucially depends on the distribution. One  might say that all the standard guesses are wrong. Of course, the second derivative is continuous (evidently, it simply means that it is continuous at 1 because at all other points, the continuity is obvious), but then the surprises begin. If the second derivative starts with a positive value at zero, it can either remain positive or have inflection points, depending on the distribution. If it starts from the negative value, it can have the first infection point both before 1 and after 1, due to the distribution, too (point 1 is interesting as some crucial point for entropy, so, we compare the value of inflection points with it). The  value of the second derivative at zero is bounded from below but unbounded from above. Some superposition of entropy is convex, and this fact simultaneously describes why other similar properties depend on distribution. Due to the over-complexity of some expressions, which defied analytical consideration, we propose several illustrations performed  by numerical methods. We investigate robustness of the R\'{e}nyi entropy w.r.t. the distribution, and it turns out that the rate of respective convergence depends on initial distribution, too. Further, we establish  convergence of the disturbed entropy when the initial distribution
is uniform but the number of events increases to $\infty$
and  prove that limit of R\'{e}nyi entropy of binomial distribution is equal to entropy of Poisson distribution. It was previously proved   in \cite{BaP} that Shannon entropy of binomial distribution is increasing to entropy of Poisson distribution. Our proof of this particular fact is simpler because uses only Lebesgue's dominated convergence theorem. The paper is organized as follows. Section \ref{sec_2} is devoted to the convexity properties of the R\'{e}nyi entropy, Section \ref{sec_3} describes  robustness of the   R\'{e}nyi entropy, and Section \ref{sec_4} contains some auxiliary results.

\section{Convexity of the R\'{e}nyi entropy}\label{sec_2}
To start, we consider the general properties of the 2nd derivative of the R\'{e}nyi entropy.
\subsection{The form and the continuity  of the 2nd derivative}
Let's denote $S_i(\alpha)=\sum_{k=1}^{N}p_k^\alpha\log^ip_k, i=0,1,2,3. $ Denote also $f(\alpha)=\log\left( \sum_{k=1}^{N} p_k^{\alpha}\right)$. Obviously, function $f\in C^\infty(\mathbb{R^+})$, and its first derivatives equal
 $$f'(\alpha)=\frac{S_1(\alpha)}{S_0(\alpha)},\; f''(\alpha)=\frac{S_2(\alpha)S_0(\alpha)-S^2_1(\alpha)}{S^2_0(\alpha)},$$ $$f'''(\alpha)
 =\frac{S_3(\alpha)S^2_0(\alpha)-3S_2(\alpha)S_1(\alpha)S_0(\alpha)+2S^3_1(\alpha)}{S^3_0(\alpha)}.$$ In particular, if to consider the random variable $\xi$ taking values $\log p_k$ with probability $p_k$, then
 \begin{equation}\begin{gathered} \label{equ_3}
    f'(1)=E(\xi)<0,\;f''(1)=E(\xi^2)-(E(\xi))^2>0,\\
 f'''(1)=E(\xi^3)-3E(\xi^2)E(\xi)+2(E(\xi))^3,
 \end{gathered}\end{equation} and the sign of $f'''(1)$ is not clear (as  we can see below, it can be both $+$ and $-$).
\begin{lemma}
 Let $p_k\neq0$ for all $1\leq k\leq N$. Then
\begin{itemize}
\item[(i)] (a) The 2nd derivative $\mathcal{H}^{''}_{\alpha}(p)$ equals
\begin{align}\label{equ_1}
\mathcal{H}^{''}_{\alpha}(p)=-\frac{1}{(1-\alpha)^3} \left(\sum_{k=1}^{N} \left((1-\alpha) q_k'(\alpha)+2q_k(\alpha)\right)\log\frac{q_k(\alpha)}{p_k} \right),
\end{align}
where $$q_k(\alpha)=\frac{p_k^\alpha}{\sum_{k=1}^{N} p_k^\alpha}.$$

(b) The 2nd derivative $\mathcal{H}^{''}_{\alpha}(p)$ can be also presented as
  \begin{align}\label{equ_2}\mathcal {H}^{''}_{\alpha}(p)=-\frac{1}{3}f'''(\theta)\end{align}
  for some $0<\theta<\alpha.$
\item[(ii)] The 2nd derivative  $\mathcal{H}^{''}_{\alpha}(p)$ is continuous on $\mathbb{R}^+$ if we put $$\mathcal{H}^{''}_{\alpha}(1)=-\frac{1}{3}f'''(1)=-\frac{1}{3}(E(\xi^3)-3E(\xi^2)E(\xi)+2(E(\xi))^3).$$
\end{itemize}
\end{lemma}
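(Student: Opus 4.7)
The overall approach is to exploit the factorization $\mathcal{H}_\alpha(p) = f(\alpha)/(1-\alpha)$ and treat the singularity at $\alpha=1$ as removable, to be handled by Taylor analysis of the smooth function $f$. Two applications of the quotient rule (using $f\in C^\infty(\mathbb{R}^+)$) give the intermediate formula
\[
\mathcal{H}''_\alpha(p) = \frac{(1-\alpha)^2 f''(\alpha) + 2(1-\alpha)f'(\alpha) + 2f(\alpha)}{(1-\alpha)^3},
\]
from which all three assertions will follow by re-expressing or analyzing the numerator.

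For part (i)(a), the plan is to substitute $\log(q_k(\alpha)/p_k) = (\alpha-1)\log p_k - f(\alpha)$ and $q_k'(\alpha) = q_k(\alpha)[\log p_k - f'(\alpha)]$ into the right-hand side of \eqref{equ_1} and expand, using the moment identities $\sum_k q_k(\alpha) = 1$, $\sum_k q_k(\alpha)\log p_k = f'(\alpha)$, and $\sum_k q_k(\alpha)(\log p_k)^2 = f''(\alpha)+(f'(\alpha))^2$, each of which is verified by a direct manipulation of $S_0, S_1, S_2$. After expansion the cross terms collapse to exactly $-\bigl[(1-\alpha)^2 f''(\alpha) + 2(1-\alpha)f'(\alpha) + 2f(\alpha)\bigr]$, matching the intermediate formula after division by $(1-\alpha)^3$. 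The only delicate point is the bookkeeping; everything else is straightforward algebra.

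For part (i)(b), I would set $\phi(\alpha) = (1-\alpha)^2 f''(\alpha) + 2(1-\alpha)f'(\alpha) + 2f(\alpha)$ (the numerator above), observe $\phi(1)=0$ by direct substitution, and notice the telescoping
\[
\phi'(\alpha) = (1-\alpha)^2 f'''(\alpha),
\]
so that $\phi(\alpha) = \int_1^\alpha (1-t)^2 f'''(t)\,dt$. Since $(1-t)^2$ has constant sign and $f'''$ is continuous, the mean value theorem for integrals produces some $\theta$ between $\alpha$ and $1$ with $\phi(\alpha) = -\tfrac{(1-\alpha)^3}{3} f'''(\theta)$, which yields \eqref{equ_2} (with $\theta$ understood to lie strictly between $\alpha$ and $1$). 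Part (ii) is then immediate: as $\alpha \to 1$ the intermediate value $\theta$ is forced to $1$, continuity of $f'''$ yields $\mathcal{H}''_\alpha(p) \to -\tfrac{1}{3}f'''(1)$, and $f'''(1)$ equals the expectation expression displayed in \eqref{equ_3}. The main obstacle is the careful algebra in (i)(a); once the telescoping identity $\phi' = (1-\alpha)^2 f'''$ is spotted, parts (i)(b) and (ii) are routine applications of standard calculus tools.
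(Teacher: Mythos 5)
Your proposal is correct and follows essentially the same route as the paper: both write $\mathcal{H}_\alpha(p)=f(\alpha)/(1-\alpha)$, reduce everything to the numerator $(1-\alpha)^2f''(\alpha)+2(1-\alpha)f'(\alpha)+2f(\alpha)$, and obtain $\mathcal{H}''_\alpha(p)=-\tfrac13 f'''(\theta)$ as a mean-value representation of the second derivative of the slope function of $f$ at $1$. The only difference is that you supply details the paper omits --- the explicit expansion behind \eqref{equ_1} and the telescoping identity $\phi'(\alpha)=(1-\alpha)^2f'''(\alpha)$ combined with the mean value theorem for integrals in place of the cited \enquote{standard Taylor formula} --- and your placement of $\theta$ strictly between $\alpha$ and $1$ is the more accurate localization.
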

\begin{proof} Equaity \eqref{equ_1} is a result of direct calculations. Concerning equality \eqref{equ_2}, we can present
$\mathcal {H}_\alpha(p)$ as
$$\mathcal {H}_\alpha(p)=\frac{f(\alpha)-f(1)}{1-\alpha},$$
therefore, $-\mathcal {H}_\alpha(p)$ is a slope function for $f$. Taking successive derivatives, we get from standard Taylor formula that
$$\mathcal{H}^{'}_{\alpha}(p)=\frac{f'(\alpha)(1-\alpha)+f(\alpha)}{(1-\alpha)^2}=-\frac{1}{2}f^{''}(\eta),$$
and  $$\mathcal{H}^{''}_{\alpha}(p)=\frac{f''(\alpha)(1-\alpha)^2+2f'(\alpha)}{(1-\alpha)+2f(\alpha)}=-\frac{1}{3}f^{'''}(\theta),$$
where $\eta,\theta\in(0,\alpha)$. If $\alpha\rightarrow 1$, then both $\eta$ and $\zeta$ tend to 1. Taking into account \eqref{equ_3},  we immediately get both equality \eqref{equ_2} and statement $(ii)$.
\end{proof}

\subsection{Behavior of the 2nd derivative at the origin} Let us consider the starting point for the 2nd derivative, i.e., the behavior of $\mathcal{H}^{''}_{\alpha}(p)$ at zero as a function of a distribution vector $p$. Analyzing \eqref{equ_1}, we see that
$\mathcal{H}^{''}_{\alpha}(p)$ as function of $\alpha$ is continuous in 0. Moreover,
$$q_k(0)=1/N,\;q_k'(0)=\frac{\log p_k}{N}-\frac{\sum_{k=1}^N\log p_k}{N^2},$$ so we can present $\mathcal{H}^{''}_{\alpha}(p)$ as
\begin{align*}
\mathcal{H}^{''}_{0}(p)&=-\sum_{k=1}^{N} \left(\frac1N\log p_k-\frac{1}{N^2}\sum_{i=1}^{N} \log p_i+\frac2N \right)\log\frac{1}{Np_k}
\\ &=
\sum_{k=1}^{N} \left(\frac1N\log p_k-\frac{1}{N^2}\sum_{i=1}^{N} \log p_i+\frac2N \right)\left(\log N+\log p_k\right)
\\ &=
2\log N +\frac1N\sum_{k=1}^{N}(\log p_k)^2-\frac{1}{N^2}\left(\sum_{k=1}^{N} \log p_k\right)^2 +\frac2N\sum_{k=1}^{N} \log p_k.
\end{align*}
Now we are interested in the sign of $\mathcal{H}^{''}_{\alpha}(p)$. Give an example of distributions for which $\mathcal{H}^{''}_{\alpha}(p)>0 $ it is very simple, one of such examples is given at Figure 1. Concerning negative $\mathcal{H}^{''}_{\alpha}(p)$, it is also possible, however, at this moment  we prefer to start with a more general result.
\begin{lemma}
If some probability vector is $p$ a point of local extremum of $\mathcal{H}^{''}_{\alpha}(p)$ then either $p=p(uniform)=\left(\frac{1}{N},\ldots,\frac{1}{N}\right)$ or it contains   two different probabilities.
\end{lemma}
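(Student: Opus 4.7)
My plan is to use the method of Lagrange multipliers on $\mathcal{H}''_\alpha(p)$ viewed as a function on the open simplex $\{p : p_k > 0,\; \sum_{k=1}^N p_k = 1\}$. At any interior local extremum, the gradient of $\mathcal{H}''_\alpha$ must be proportional to $(1,\ldots,1)$, so all partial derivatives $\partial\mathcal{H}''_\alpha/\partial p_k$ share one common value $\lambda$. The goal is to show that this equality forces the entries $p_k$ to take at most two distinct values.

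Starting from the explicit expression for $\mathcal{H}''_0(p)$ derived just above, a direct differentiation yields
\[
\frac{\partial \mathcal{H}''_0(p)}{\partial p_k} \;=\; \frac{2}{N p_k}\bigl(\log p_k - \bar L + 1\bigr), \qquad \bar L := \frac{1}{N}\sum_{i=1}^{N} \log p_i,
\]
and, since $\bar L$ depends only on $p$, the Lagrange condition $\partial\mathcal{H}''_0/\partial p_k = \lambda$ collapses to the single one-variable equation
\[
\phi(p_k) \;=\; c, \qquad \phi(x) := \log x - \mu x,
\]
which must be satisfied by every $p_k$, with $\mu = \lambda N/2$ and $c = \bar L - 1$ constants determined by $p$.

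The final step is to count positive solutions of $\phi(x) = c$. Its derivative $\phi'(x) = 1/x - \mu$ is either strictly positive throughout $(0,\infty)$ (when $\mu \le 0$, so that $\phi$ is strictly increasing) or changes sign exactly once at $x = 1/\mu$ (when $\mu > 0$, so that $\phi$ is strictly unimodal with a unique maximum). In either case $\phi^{-1}(c)$ contains at most two points, so $\{p_1,\ldots,p_N\}$ takes at most two distinct values, which is exactly the dichotomy asserted in the lemma.

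I expect the main obstacle to be purely algebraic: organizing the differentiation so that the Lagrange condition collapses to the clean form $\log x - \mu x = c$. Once this is done, the unimodality step occupies a single line. The same strategy should extend to $\mathcal{H}''_\alpha(p)$ for other values of $\alpha$, though the critical-point equation then takes the more intricate form $x^{\alpha-1}\bigl[A + B\log x + C(\log x)^2\bigr] = \lambda$, so one would need a separate monotonicity analysis of this transcendental expression to bound the number of positive solutions by two.
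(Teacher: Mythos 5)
Your proposal is correct and follows essentially the same route as the paper: apply Lagrange multipliers to the explicit formula for $\mathcal{H}''_0(p)$ on the simplex, reduce the stationarity condition to a single scalar equation of the form $\log x-\mu x=c$ satisfied by every $p_k$, and conclude by unimodality that it has at most two positive roots. The only cosmetic difference is that the paper sums the stationarity equations to pin down the multiplier explicitly (yielding $f(x)=x-\tfrac1N\log x$ with its minimum at $x=1/N$), whereas you handle both signs of $\mu$ without computing it; the conclusion is identical.
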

\begin{proof}
Let us formulate the  necessary conditions for   $\mathcal{H}^{''}_{0}(p)$ to have a local   extremum at some point. Taking into account limitation  $\sum_{k=1}^{N} p_k=1$, these conditions have a form
\begin{align*}
\begin{cases}
2\log N +\frac1N\sum_{k=1}^{N}(\log p_k)^2-\frac{1}{N^2}\left(\sum_{k=1}^{N} \log p_k\right)^2 +\frac2N\sum_{k=1}^{N} \log p_k \longrightarrow extr\\
\sum_{k=1}^{N} p_k=1.
\end{cases}
\end{align*}
We create a  Lagrangian function \begin{equation*}\begin{gathered}
L = \lambda_0\left(2\log N +\frac1N\sum_{k=1}^{N}(\log p_k)^2-\frac{1}{N^2}\left(\sum_{k=1}^{N} \log p_k\right)^2 +\frac2N\sum_{k=1}^{N} \log p_k\right)\\+\lambda\left(\sum_{k=1}^{N} p_k-1\right). \end{gathered} \end{equation*}
If some $p$ is an extreme point then there exist  $\lambda_0$ and $\lambda$ such that $\lambda_0^2+\lambda^2\neq0$ and $\frac{\partial L}{\partial p_i}(p)=0$ for all $1\leq i\leq N$, i.e.,
\begin{align*}
\frac{\partial L}{\partial p_i} = \lambda_0\left(\frac{2}{Np_i}\log p_i -\frac {2}{N^2p_i}\left(\sum_{k=1}^{N} \log p_k\right)+\frac{2}{Np_i}\right)+\lambda=0.
\end{align*}
If $\lambda_0=0$ then $\lambda=0$. However,  $\lambda_0^2+\lambda^2\neq0$, therefore we can put $\lambda_0=1$. Then

$$-\lambda p_i = \frac{2}{N}\log p_i -\frac {2}{N^2}\left(\sum_{k=1}^{N} \log p_k\right)+\frac{2}{N}.$$
taking a sum of these equalities we get that $\lambda=-2$ whence
\begin{equation}\label{equ_twoprob}p_i-\frac1N\log p_i=\frac1N-\frac{1}{N^2}\left(\sum_{k=1}^{N} \log p_k\right).\end{equation}

So, if distribution vector $p$ is an extreme point  then $p_1-\frac1N\log p_1=...=p_N-\frac1N\log p_N$. Let's have a look at continuous function $f(x)=x-\frac1N\log x, \; x\in(0,1)$. Its derivative equals
\begin{align*}
f'(x)=1-\frac{1}{Nx}=0 \Leftrightarrow x=\frac1N, \;
\sign(f'(x))=\sign\left(x-\frac1N\right),
\end{align*}
\begin{align*}
\lim_{x\to0+}f(x)=+\infty,\;\lim_{x\to+1}f(x)=1.
\end{align*}
So, $f(x)$ has its global minimum at point $x=\frac1N$, and for any $ f(\frac1N)<y\le 1$ there exist two points,  $x'\neq x'',\; x', x''\in(0,1)$ such that $f(x')=f(x'')=y$. Thus, if the vector of probabilities is a vector of local extremum of $\mathcal{H}^{''}_{0}(p)$, then it contains no more than two  different probabilities. Obviously, it can be $p=p(uniform)=\left(\frac{1}{N},\ldots,\frac{1}{N}\right)$.\end{proof}
\begin{remark}
Note that  $\mathcal{H}^{''}_{0}(p(uniform))=0.$ Therefore, in order to find the distribution for which $\mathcal{H}^{''}_{0}(p)<0 $ let us consider the distribution vector that contains only two different probabilities $p_0, q_0$ such that:
\begin{equation}\label{distr_cond}
\begin{cases}
p_0-q_0 = \frac1N\left(\log p_0-\log q_0\right), \\
kp_0+(N-k)q_0=1,
\end{cases}
\end{equation}
where $N, k \in \mathbb{N}, \; N>k$ and $p_0, q_0 \in (0,1)$.
\end{remark}

\begin{lemma}\label{negin0}
Let $p$ be distribution vector satisfying \eqref{distr_cond}. Then $\mathcal{H}^{''}_{0}(p)<0$.
\end{lemma}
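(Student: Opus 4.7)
The plan is to exploit the two-value structure of $p$ to collapse $\mathcal{H}''_0(p)$ into the scalar quantity $\phi(T):=\log(1+T)-T$, which is non-positive, and then rule out $T=0$ using the constraint \eqref{distr_cond}.

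\textbf{Step 1 (parametrisation).} Write $k$ of the coordinates of $p$ as $p_0$ and the remaining $N-k$ as $q_0$. From $kp_0+(N-k)q_0=1$ and $r:=p_0-q_0\ne 0$ one obtains $p_0=(1+x)/N$, $q_0=(1-y)/N$ with $x:=(N-k)r$ and $y:=kr$. The first equation of \eqref{distr_cond} becomes $\log(1+x)-\log(1-y)=x+y$.

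\textbf{Step 2 (collapse).} Insert these expressions into the formula for $\mathcal{H}''_0(p)$. The variance combination $\tfrac{1}{N}\sum(\log p_k)^2-\tfrac{1}{N^2}(\sum\log p_k)^2$ equals $\tfrac{k(N-k)}{N^2}(\log p_0-\log q_0)^2=k(N-k)r^2=xy$ after using \eqref{distr_cond}, and the $\pm 2\log N$ contributions cancel, leaving
\begin{equation*}
\mathcal{H}''_0(p)=xy+\tfrac{2k}{N}\log(1+x)+\tfrac{2(N-k)}{N}\log(1-y).
\end{equation*}
Next, rewrite $\tfrac{2k}{N}u+\tfrac{2(N-k)}{N}v=(u+v)+\tfrac{2k-N}{N}(u-v)$ with $u=\log(1+x)$, $v=\log(1-y)$, and substitute the constraint $u-v=x+y=Nr$; the last two terms collapse to $\log((1+x)(1-y))+(y-x)$. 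Setting $T:=(x-y)-xy=(N-2k)r-k(N-k)r^2$ so that $(1+x)(1-y)=1+T$, one gets $\mathcal{H}''_0(p)=xy+\log(1+T)+(y-x)=\log(1+T)-T=\phi(T)$.

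\textbf{Step 3 (sign).} Since $\phi(t)\le 0$ on $(-1,\infty)$ with equality only at $t=0$, and $1+T=(1+x)(1-y)>0$ holds automatically, it suffices to verify $T\ne 0$. The polynomial equation $T=0$ has roots $r=0$ (excluded since $p_0\ne q_0$) and $r_0:=(N-2k)/(k(N-k))$. Plugging $r_0$ into \eqref{distr_cond} and computing $1+(N-k)r_0=(N-k)/k$, $1-kr_0=k/(N-k)$, the constraint reduces to $t-1/t=2\log t$ with $t:=(N-k)/k>0$. The function $f(t):=t-1/t-2\log t$ has derivative $f'(t)=(1-1/t)^2\ge 0$ vanishing only at $t=1$, so $f$ is strictly increasing with unique zero at $t=1$; this forces $N=2k$, hence $r_0=0$, a contradiction. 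Therefore $T\ne 0$ and $\mathcal{H}''_0(p)=\phi(T)<0$.

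\textbf{Main obstacle.} The technical heart is Step 3: showing that the spurious polynomial zero $r_0$ of $T$ is genuinely inconsistent with the transcendental constraint \eqref{distr_cond}. The reduction to the classical relation $t-1/t=2\log t$ (whose only positive solution is $t=1$) makes this elementary, but until one notices the clean substitution the two conditions look unrelated.
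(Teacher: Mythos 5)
Your proof is correct, and although your algebraic reduction lands on the same key identity as the paper's --- your $\log(1+T)-T$ with $1+T=(1+x)(1-y)=N^2p_0q_0$ is exactly the paper's $\log(N^2p_0q_0)-N^2p_0q_0+1$ --- the way you rule out the degenerate case $T=0$ is genuinely different. The paper shows $N^2p_0q_0\neq 1$ by a transcendence argument: if $N^2p_0q_0=1$, then $q_0$ satisfies a quadratic with rational coefficients, so $p_0$ and $q_0$ are algebraic, hence $p_0-q_0$ is a nonzero algebraic number, while $\frac1N\left(\log p_0-\log q_0\right)$ is transcendental by the Lindemann--Weierstrass theorem, contradicting \eqref{distr_cond}. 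You instead observe that $T=0$ with $r\neq 0$ forces $r$ to equal the explicit rational value $r_0=(N-2k)/(k(N-k))$, and that substituting $r_0$ into the constraint yields $t-1/t=2\log t$ with $t=(N-k)/k$, whose only positive solution is $t=1$ (since $t-1/t-2\log t$ has derivative $(1-1/t)^2$ and is therefore strictly increasing), which forces $N=2k$, hence $r_0=0$, a contradiction. This is entirely elementary and avoids transcendence theory; the trade-off is that it leans on the specific quadratic structure of $T$ in $r$, whereas the paper's argument is more robust to perturbations of the setup. Both are complete. One small point worth making explicit in your write-up: the hypothesis $p_0\neq q_0$ (i.e., $r\neq 0$), which you use to discard the root $r=0$, is part of the setup --- it is what the remark preceding the lemma means by a vector containing two \emph{different} probabilities --- since for $r=0$ the distribution is uniform and $\mathcal{H}^{''}_{0}(p)=0$ rather than negative.
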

\begin{proof}
First, we will show that $\mathcal{H}^{''}_{0}(p)$ is non-positive. For that we rewrite $\mathcal{H}^{''}_{0}(p)$ in terms of $p_0$ and $q_0$:
\begin{align*}
\mathcal{H}^{''}_{0}(p)&= 2\log N+\frac1N\left(k(\log p_0)^2+(N-k)(\log q_0)^2\right)-
\frac{1}{N^2}\left(k\log p_0+(N-k)\log q_0\right)^2
\\ &+
\frac2N\left(k\log p_0 + (N-k)\log q_0\right) \\& = 2\log N+\frac{k(N-k)}{N^2}
\left((\log p_0)^2-2\log p_0\log q_0+(\log q_0)^2\right)
\\ &+
\frac{2k}{N}\left(\log p_0-\log q_0\right) + 2\log q_0 = 2\log Nq_0 +k(N-k)(p_0-q_0)^2+2k(p_0-q_0).
\end{align*}
We know that $kp_0+(N-k)q_0=1$, whence $k = \frac{ Nq_0-1}{q_0-p_0}$, and
$N-k=\frac{1-Np_0 }{q_0-p_0}$. Then
\begin{align*}
\mathcal{H}^{''}_{0}(p) &= 2\log Nq_0+(1-Nq_0)(Np_0-1)+2(1-Nq_0)\\ & = 2 \log Nq_0 + N(p_0-q_0)+1-N^2p_0q_0
\\ &=
\log (Nq_0)^2+\log\frac{p_0}{q_0}+1-N^2p_0q_0=\log N^2p_0q_0-N^2p_0q_0+1.
\end{align*}
Note that $\log x -x+1<0$ for $x>0, x\neq 1$. We want to show that under conditions \eqref{distr_cond} $N^2p_0q_0$ can't be equal to 1. Suppose that $N^2p_0q_0=1$. Then it follows from \eqref{distr_cond} that
\begin{align*}
\frac{k}{N^2}+(N-k)q_0^2=q_0.
\end{align*}
It means that $q_0$ and $p_0$ are algebraic numbers. Thus, their defference $p_0-q_0$ is also algebraic. On the other hand, by Lindemann–Weierstrass theorem $\frac1N\left(\log p_0-\log q_0\right)$ is transcendental number, which contradicts \eqref{distr_cond}. So $N^2p_0q_0\neq1$ and $\mathcal{H}^{''}_{0}(p)<0$.
\end{proof}

\begin{theorem}
For any $n>2$ there exists $N\geq n$ and a probability vector $p=(p_1,\ldots,p_N)$ such that $\mathcal{H}^{''}_{0}(p)<0.$
\end{theorem}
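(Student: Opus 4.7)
Given $n > 2$, the plan is to take $N = n$ and exhibit a distribution of the two-value form considered in Lemma~\ref{negin0}: namely one with $k = N-1$ copies of some value $p_0$ and a single copy of another value $q_0$. The bound $\mathcal{H}^{''}_{0}(p) < 0$ will then follow at once from Lemma~\ref{negin0}.

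Imposing $(N-1)p_0 + q_0 = 1$, i.e.\ $q_0 = 1 - (N-1)p_0$, the remaining equation in \eqref{distr_cond} becomes $g(p_0) = 0$, where
\[
g(p) := f(p) - f\bigl(1-(N-1)p\bigr),\qquad f(x) := x - \tfrac{1}{N}\log x,\qquad p \in \bigl(0, \tfrac{1}{N-1}\bigr).
\]
The trivial root $g(1/N) = 0$ merely recovers the uniform distribution (for which $\mathcal{H}^{''}_{0} = 0$ by the remark preceding Lemma~\ref{negin0}, hence useless here), so what is needed is a \emph{second} root of $g$, and I will locate one in $(0, 1/N)$.

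The key observation is that $p = 1/N$ is in fact a degenerate zero of $g$. From $f'(x) = 1 - 1/(Nx)$ one has $f'(1/N) = 0$, whence $g'(1/N) = 0$, and from $f''(x) = 1/(Nx^2)$ a direct computation yields
\[
g''(1/N) \;=\; f''(1/N)\bigl(1 - (N-1)^2\bigr) \;=\; N\bigl(1 - (N-1)^2\bigr) \;<\; 0 \qquad\text{for } N > 2.
\]
Thus $g$ attains a strict local maximum of value $0$ at $p = 1/N$, so $g < 0$ on a punctured neighbourhood of $1/N$. On the other hand, as $p \to 0^+$ one has $f(p) \to +\infty$ while $f(1-(N-1)p) \to f(1) = 1$, so $g(p) \to +\infty$. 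The intermediate value theorem then furnishes $p_0 \in (0, 1/N)$ with $g(p_0) = 0$; setting $q_0 := 1-(N-1)p_0 \in (1/N, 1)$, the vector consisting of $N-1$ copies of $p_0$ and one copy of $q_0$ satisfies \eqref{distr_cond}, and Lemma~\ref{negin0} delivers $\mathcal{H}^{''}_{0}(p) < 0$.

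The only real obstacle is spotting that the obvious zero $p = 1/N$ has multiplicity at least two, which makes the sign of $g''(1/N)$ the decisive quantity. In passing, this also explains why the hypothesis $n > 2$ is sharp: for $N = 2$ one has $g''(1/2) = 0$, and in fact $g$ turns out to be strictly monotone on $(0,1)$, so the uniform is the only root.
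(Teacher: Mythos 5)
Your proof is correct, and while it rests on the same pillar as the paper's argument --- reducing to a two-valued distribution satisfying \eqref{distr_cond} and invoking Lemma \ref{negin0} --- your existence argument for a solution of \eqref{distr_cond} is genuinely different and in fact yields a stronger conclusion. The paper parametrizes by $x=Np_0\in(0,1)$, defines $y(x)$ implicitly via $x-y=\log x-\log y$, and uses continuity of $r(x)=\frac{y(x)-1}{y(x)-x}$ together with $r(x)\to 1$ as $x\to 0^+$ to find rational values $r'=k/N$ in its range; this produces \emph{some} admissible $N\geq n$ (with $N$ forced large only as $r'\to 1^-$) but gives no control over which $N$ occur. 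You instead fix $N=n$ and $k=N-1$ and solve \eqref{distr_cond} directly: the decisive observation is that $p=1/N$ is a degenerate zero of $g(p)=f(p)-f(1-(N-1)p)$, with $g'(1/N)=0$ and $g''(1/N)=f''(1/N)\bigl(1-(N-1)^2\bigr)=N\bigl(1-(N-1)^2\bigr)<0$ for $N>2$, so $g<0$ near $1/N$ while $g(p)\to+\infty$ as $p\to 0^+$, and the intermediate value theorem supplies a second root $p_0\in(0,1/N)$. The computations check out, the resulting $p_0<1/N<q_0=1-(N-1)p_0$ are distinct and lie in $(0,1)$, so Lemma \ref{negin0} applies verbatim. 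What your route buys is the sharper statement that the conclusion holds with $N=n$ for \emph{every} $n>2$, not merely for some $N\geq n$; your closing remark also correctly explains why $n>2$ is needed, since for $N=2$ one has $g''(1/2)=0$ and $g'(p)=2-\frac{1}{2p(1-p)}\leq 0$, so the uniform root is the only one.
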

\begin{proof}
Consider the distribution vector $p$ that satisfies conditions \eqref{distr_cond}. From Lemma \ref{negin0} we know that $\mathcal{H}^{''}_{0}(p)<0$. Now we want to show that there exist arbitrarily large $N\in \mathbb{N}$ and distribution vector p of length N that satisfy those conditions. For that we denote
\begin{align*}
x=Np_0, \; y=Nq_0, \; r=\frac kN=\frac{ y-1}{ y-x}.
\end{align*}
Then $0<x<1<y$ and $r<1$ and $x-y=\log x-\log y$. Function $x - \log x$ is decreasing on $(0,1)$, is increasing on $(1,+\infty)$ and is equal to 1 at point 1. Let $y=y(x)$ be implicit function defined by $x-y=\log x-\log y$. By that we get 1-to-1 correspondence from $x\in(0,1)$ to $y\in(1,+\infty)$. We also have fuction $r(x)=\frac{y(x)-1}{y(x)-x}$. If we find $x'\in(0,1)$ such that $r'=r(x')$ is rational then we can pick $N, k \in \mathbb{N}$ such that $r'=\frac{k}{N}$ and get distribution vector p satisfying \eqref{distr_cond} with $p_0=\frac{x}{N}, \; q_0=\frac{y}{N}$. However, we won't find such $x'$, we will just show that they exist. To do that observe that $y(x)$ is continuous function of $x$ and so is function $r(x)=\frac{y(x)-1}{y(x)-x}$. What's more,
$$y(x)\to+\infty, \; x\to0+ \; so \; r(x)\to1, \; x\to0+.$$
Let's fix $x_0 \in (0,1), \; r(x_0)<1$. Then for any $r'\in(r(x_0),1)$ there exists $x'\in(0,x_0)$ such that $r(x')=r'$. By taking $r'\in\mathbb{Q}$ we get that there exists $x'$ such that $\frac{k}{N}<1$ and is rational. Finally, we want to show that $N$ can be arbitrarily large. For that simply observe that $\frac{k}{N}=r'$ so as $r'\to1-$ we get that $N\to +\infty$.
\end{proof}

\begin{lemma}
Let N be fixed. Then $\mathcal{H}^{''}_{0}(p)$ as the function of vector $p$ is bounded from below and   is unbounded from above.
\end{lemma}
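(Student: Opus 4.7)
My plan is to first rewrite the formula for $\mathcal{H}_0''(p)$ derived earlier in a clean mean--variance form. Let $X$ be the random variable that takes each value $\log p_k$ with probability $1/N$ (uniform law on $\{1,\ldots,N\}$). The displayed formula for $\mathcal{H}_0''(p)$ is then exactly
$$\mathcal{H}_0''(p) = 2\log N + V(X) + 2E(X),$$
where $V(X)=E(X^2)-(E(X))^2$. This decomposition makes transparent how $\mathcal{H}_0''(p)$ is controlled by a trade-off between the (nonnegative) variance of $\log p$ and the (very negative) mean of $\log p$.

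For the unboundedness from above I will take the explicit family $p_1=\varepsilon$, $p_k=(1-\varepsilon)/(N-1)$ for $2\le k\le N$, and send $\varepsilon\to 0^+$. A direct substitution shows that $V(X)$ grows like $\frac{N-1}{N^2}(\log\varepsilon)^2$ while $2E(X)$ is only of order $\log\varepsilon$; the quadratic term dominates, so $\mathcal{H}_0''(p)\to+\infty$.

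For the boundedness from below the plan is to introduce $a:=-E(X)$, which satisfies $a\ge\log N$ by Jensen's inequality applied to the concave $\log$, together with the centered variables $y_k:=\log p_k+a$. These satisfy $\sum_{k=1}^N y_k=0$ and $\sum_{k=1}^N e^{y_k}=e^a$ (since $\sum p_k=1$). Writing $M_+:=\max_k y_k\ge 0$, the elementary bound $e^a=\sum_k e^{y_k}\le Ne^{M_+}$ gives $2a-2\log N\le 2M_+$, while the contribution of the maximizing index alone yields $V(X)=\frac{1}{N}\sum y_k^2\ge M_+^2/N$. Combining,
$$\mathcal{H}_0''(p)=2\log N+V(X)-2a\;\ge\;\frac{M_+^2}{N}-2M_+,$$
and the right-hand side attains its minimum $-N$ at $M_+=N$. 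Hence $\mathcal{H}_0''(p)\ge -N$ for every probability vector with strictly positive entries; the boundary case of some $p_k=0$ need not be considered separately since the same computation shows $\mathcal{H}_0''(p)\to+\infty$ whenever any coordinate is sent to $0$.

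The only conceptual obstacle is the switch to the centered variables $y_k$: once one observes that the constraint $\sum p_k=1$ translates into $\sum e^{y_k}=e^a$ and that $M_+$ simultaneously controls $a-\log N$ from above and $V(X)$ from below, the lower bound collapses to minimizing a one-variable quadratic. The upper half is a direct computation on a single explicit family.
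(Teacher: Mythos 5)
Your argument is correct, and for the harder half (boundedness from below) it takes a genuinely different and cleaner route than the paper. The paper works with $x_k=\log p_k$, splits the indices according to whether $p_k<1/N$ or $p_k\ge 1/N$, applies the Cauchy--Schwarz inequality to each block, invokes compactness of the cube $[\log\frac1N,0]^{N-n}$ to bound the contribution of the large-probability block, and finishes by completing the square coordinate by coordinate; the resulting lower bound is not made explicit. You instead exploit the probabilistic decomposition $\mathcal{H}''_0(p)=2\log N+V(X)+2E(X)$ with $X$ uniform on $\{\log p_k\}$: Jensen's inequality gives $a:=-E(X)\ge\log N$, and after centering, the single constraint $\sum_k e^{y_k}=e^{a}$ makes the maximum $M_+=\max_k y_k$ control $a-\log N$ from above (via $e^{a}\le Ne^{M_+}$) while $M_+^2/N$ controls $V(X)=\frac1N\sum_k y_k^2$ from below; minimizing $t\mapsto t^2/N-2t$ then yields the explicit, sharp-looking bound $\mathcal{H}''_0(p)\ge -N$. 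This buys an explicit constant and avoids the case split and the compactness argument entirely (the paper's final numerical estimate of the completed square is in fact slightly off, though a finite bound survives). Your proof of unboundedness from above is the same as the paper's: the family $p_1=\varepsilon$, $p_2=\dots=p_N=\frac{1-\varepsilon}{N-1}$, where the variance term $\frac{N-1}{N^2}(\log\varepsilon)^2$ dominates the linear term $\frac2N\log\varepsilon$. The closing remark about coordinates tending to $0$ is harmless but unnecessary, since the formula for $\mathcal{H}''_0(p)$ is only considered for strictly positive entries.
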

\begin{proof} Recall that $\mathcal{H}^{''}_{0}(p)=0$ on the uniform distribution and exclude this case from further  consideration.
In order to simplify the notations,  we denote $x_k=\log p_k$,   and let
\begin{align*}
S_N:= N(\mathcal{H}^{''}_{0}(p)-2\log N)=\sum_{k=1}^{N} (x_k)^2-\frac1N\left(\sum_{k=1}^{N} x_k \right)^2+2\sum_{k=1}^{N} x_k.
\end{align*}
  Note that there exists $n\leq N-1$ such that $$x_1<\log\frac1N, ..., x_n<\log\frac1N, \; x_{n+1}\geq\log\frac1N, ..., x_N\geq\log\frac1N.$$ Further, denote the rectangle $A=[\log\frac1N;0]^{N-n}\subset \mathbb{R}^{N-n}$, and let $$S_{N,1} = \sum_{k=1}^{n} x_k, \; S_{N,2} = \sum_{k=n+1}^{N} x_k.$$ Let's establish that   $\mathcal{H}^{''}_{0}(p)$ is bounded from below.
  In this connection, rewrite $S_N$ as
\begin{align*}
S_N = \sum_{k=1}^{n} x_k^2+\sum_{k=n+1}^{N} x_k^2-\frac1N\left((S_{N,1})^2+2S_{N,1}S_{N,2}+(S_{N,2})^2 \right)+2S_{N,1}+2S_{N,2}.
\end{align*}
By Cauchy–Schwarz inequality we have $$\left(\sum_{k=1}^{n}x_k\right)^2 \leq n\sum_{k=1}^{n} x_k^2, \; \left(\sum_{k=n+1}^{N}x_k\right)^2 \leq (N-n)\sum_{k=n+1}^{N} x_k^2.$$
Therefore
\begin{align*}
S_N &\geq \left(1-\frac{n}{N}\right)\sum_{k=1}^{n} x_k^2+\frac{n}{N}\sum_{k=n+1}^{N} x_k^2-\frac2NS_{N,1}S_{N,2}+2S_{N,1}+2S_{N,2}
\\ &=
\sum_{k=1}^{n} \left(\left(1-\frac{n}{N}\right)x_k^2+x_k\left(2-\frac2NS_{N,2}\right)\right)+\frac{n}{N}\sum_{k=n+1}^{N} x_k^2+2S_{N,2}
\\ &=
\frac1N \sum_{k=1}^{n} \left(\left(N-n\right)x_k^2+2x_k\left(N-S_{N,2}\right)\right)+\frac{n}{N}\sum_{k=n+1}^{N} x_k^2+2 S_{N,2}.
\end{align*}
There exists $M>0$ such that for every $n\leq N-1$ we have $|S_{N,2}|\leq M$ because $A$ is compact and $S_{N,2}$ is continuous on $A$. Obviously, $\frac{n}{N}\sum_{k=n+1}^{N} x_k^2\ge 0$. Finally, for every $1\leq k\leq n$ we have that $\left(N-n\right)x_k^2+2x_k\left(N-S_2\right)$ is bounded from below by the value $-\frac{(N-S_{2,N})^2}{N-n}\ge -N^2-M^2.$  Resuming, we get that $S_N$ is bounded from below, and consequently  $  \mathcal{H}^{''}_{0}(p)$ is bounded from below for fixed $N$.

Now we want to establish  that $\mathcal{H}^{''}_{0}(p)$ is not bounded from above. In this connection, let $\varepsilon>0$, and  let us consider the distribution of the form $p_1=\varepsilon, \; p_2=...=p_N=\frac{1-\varepsilon}{N-1}$. Then we have
\begin{align*}
\mathcal{H}^{''}_{0}(p) &=2\log N +\frac1N\sum_{k=1}^{N}(\log p_k)^2-\frac{1}{N^2} \left(\sum_{k=1}^{N} \log p_k\right)^2 +\frac2N\sum_{k=1}^{N} \log p_k
\\ &=
2\log N+\frac{N-1}{N}\left(\log\frac{1-\varepsilon}{N-1}\right)^2+ \frac1N(\log\varepsilon)^2 -\frac{1}{N^2}\left((N-1)\log\frac{1-\varepsilon}{N-1} +\log\varepsilon\right)^2
\\ &+
\frac{2(N-1)}{N}\log\frac{1-\varepsilon}{N-1}+\frac2N\log\varepsilon = \left(\frac1N-\frac{1}{N^2}\right)\left(\log\varepsilon\right)^2
\\ &+
\left(\frac2N -\frac{2(N-1)}{N^2}\log\frac{1-\varepsilon}{N-1}\right) \log\varepsilon+2\log N + \left(\frac{N-1}{N}-\frac{(N-1)^2}{N^2}\right) \left(\log\frac{1-\varepsilon}{N-1}\right)^2
\\ &+
\frac{2(N-1)}{N}\log\frac{1-\varepsilon}{N-1}\to+\infty,\;\varepsilon\to0+.
\end{align*}
\end{proof}
\subsection{Superposition of entropy that is  convex}
Now  we establish that  the superposition of entropy with some decreasing function is   convex. Namely, we shall consider function
\begin{equation}\label{eq_4}
 \mathcal {G}_{\beta}(p)=-\mathcal {H}_{1+\frac{1}{\beta}}(p)=\beta\log\left(\sum_{k=1}^N p_k^{1+1/\beta}\right),\,\beta>0 \end{equation}
and prove its convexity. Since now we consider the tools that do not include differebtiation, we can assume that some probabilities are zero. In order to provide convexity, we start with the following simple and known result whose proof is added for the reader's convenience.

\begin{lemma}\label{Lem_2}
For any measure space $(\mathcal  X, \Sigma, \mu)$ and any measurable $f\in L^p(\mathcal  X, \Sigma, \mu)$ for some interval $p\in[a,b]$, $\norm{f}_p = \norm{f}_{L^p(\mathcal  X, \Sigma, \mu)}$ is log-convex as a function of $1/p$ on this interval.
\end{lemma}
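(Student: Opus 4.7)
The plan is to prove this classical interpolation inequality (sometimes called Lyapunov's inequality or the Riesz--Thorin scalar version) by a direct application of Hölder's inequality. Writing $t=1/p$, I want to establish that
\[
\log\|f\|_{p_\lambda} \leq (1-\lambda)\log\|f\|_{p_0}+\lambda\log\|f\|_{p_1},
\]
whenever $1/p_\lambda=(1-\lambda)/p_0+\lambda/p_1$ for $p_0,p_1\in[a,b]$ and $\lambda\in[0,1]$; this is exactly log-convexity in the variable $t=1/p$.

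First I would fix $p_0,p_1\in[a,b]$ and $\lambda\in[0,1]$, define $p_\lambda$ by the harmonic interpolation above, and split the integrand as
\[
|f|^{p_\lambda}=|f|^{p_\lambda(1-\lambda)}\cdot|f|^{p_\lambda\lambda}.
\]
Next I would apply Hölder's inequality with the exponents $r_0=p_0/(p_\lambda(1-\lambda))$ and $r_1=p_1/(p_\lambda\lambda)$. The key algebraic check is conjugacy, which follows at once from the defining relation for $p_\lambda$:
\[
\frac{1}{r_0}+\frac{1}{r_1}=p_\lambda\left(\frac{1-\lambda}{p_0}+\frac{\lambda}{p_1}\right)=1.
\]
Hölder then gives
\[
\int|f|^{p_\lambda}\,d\mu\leq\left(\int|f|^{p_0}\,d\mu\right)^{p_\lambda(1-\lambda)/p_0}\left(\int|f|^{p_1}\,d\mu\right)^{p_\lambda\lambda/p_1},
\]
so raising to the $1/p_\lambda$ power produces $\|f\|_{p_\lambda}\leq\|f\|_{p_0}^{1-\lambda}\|f\|_{p_1}^{\lambda}$, and taking logarithms yields the desired convexity inequality.

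There is essentially no obstacle here beyond bookkeeping: the only thing to watch is that the Hölder exponents $r_0,r_1$ are automatically $\geq 1$ (since $p_\lambda$ lies between $p_0$ and $p_1$ in the sense that $p_\lambda(1-\lambda)\leq p_0$ and $p_\lambda\lambda\leq p_1$), so the inequality applies. Degenerate cases ($\lambda=0$ or $\lambda=1$) are trivial, and the case when one of the norms is zero or infinite can be handled by the usual conventions, although the hypothesis $f\in L^p$ for $p\in[a,b]$ means all the norms in play are finite, so no extra care is needed.
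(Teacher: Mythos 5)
Your proof is correct and is essentially the same argument as the paper's: both split $|f|^{p_\lambda}$ into the two factors $|f|^{p_\lambda(1-\lambda)}$ and $|f|^{p_\lambda\lambda}$ and apply H\"older's inequality with the conjugate exponents $p_0/(p_\lambda(1-\lambda))$ and $p_1/(p_\lambda\lambda)$, whose conjugacy follows from the harmonic interpolation defining $p_\lambda$. Your explicit check that these exponents are at least $1$ is a small extra care the paper omits, but otherwise the two proofs coincide.
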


\begin{proof}
For any $p_2,p_1>0$ and $\theta\in (0,1)$, denote $p = \big( \theta /p_1 + (1-\theta)/p_2\big)^{-1}$ and observe that
$$\theta p/p_1 + (1-\theta) p/p_2 = 1.$$
Therefore, by the H\"older inequality
\begin{align*}
\norm{f}^p_{p} &= \int_{\mathcal  X} |f(x)|^{\theta p}\cdot |f(x)|^{(1-\theta) p} \mu(dx) \\
&\le \left(\int_{\mathcal  X} |f(x)|^{p_1} \mu(dx) \right)^{\theta p / p_1} \left(\int_{\mathcal  X} |f(x)|^{p_2} \mu(dx) \right)^{(1-\theta) p / p_2},
\end{align*}
whence
$$\log \norm{f}_p \le \theta \log \norm{f}_{p_1} + (1-\theta)\log \norm{f}_{p_2}, $$
as required.
\end{proof}
\begin{corollary}For  any probability vector $p=(p_k,1\leq k\leq N)$,   function
 $ \mathcal{G}_\beta(p), \beta>0,$
is convex.
\end{corollary}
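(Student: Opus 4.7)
The plan is to derive the convexity directly from Lemma~\ref{Lem_2} by choosing the measure space $\mathcal{X}=\{1,\dots,N\}$ with counting measure and the function $f(k)=p_k$. Under this choice $\norm{f}_r=\bigl(\sum_k p_k^r\bigr)^{1/r}$ is finite for every $r>0$, and Lemma~\ref{Lem_2} yields that $\log\norm{f}_r$ is convex in $1/r$. Setting $q=1/r$, I would introduce
\[
\Phi(q):=\log\norm{f}_{1/q}=q\log\sum_{k=1}^N p_k^{1/q},
\]
which is then convex in $q$, in particular on $(0,1)$.

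The next step is to rewrite $\mathcal{G}_\beta(p)$ in terms of $\Phi$ via the substitution $q=\beta/(\beta+1)$, so that $1/q=1+1/\beta$ and $\beta+1=1/(1-q)$. A one-line algebraic manipulation then gives
\[
\mathcal{G}_\beta(p)=\beta\log\sum_{k=1}^N p_k^{1+1/\beta}=\frac{\beta}{q}\,\Phi(q)=(\beta+1)\,\Phi\!\left(\frac{\beta}{\beta+1}\right).
\]

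Finally, I would recognize the right-hand side as a perspective transform of $\Phi$: with $s=\beta$ and $t=\beta+1$, it equals $t\,\Phi(s/t)$, evaluated along the affine map $\beta\mapsto(\beta,\beta+1)$. Since the perspective of a convex function is jointly convex on $\{(s,t):t>0\}$, and convexity is preserved under composition with affine maps, $\mathcal{G}_\beta(p)$ is convex in $\beta>0$, as required.

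The main obstacle I anticipate is matching parameterizations: the natural variable in Lemma~\ref{Lem_2} is $1/r$, whereas we need convexity in $\beta$, and the two are related by the non-affine map $\beta\mapsto\beta/(\beta+1)$; in addition, an extra factor $(\beta+1)$ multiplies the convex quantity. The perspective reformulation is precisely what absorbs both the nonlinear substitution and this prefactor into a single convex object, and it achieves this without any differentiation, so (in line with the remark preceding Lemma~\ref{Lem_2}) the argument remains valid even when some $p_k$ vanish.
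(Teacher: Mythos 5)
Your argument is correct, but it takes a genuinely different (and longer) route than the paper's. The paper applies Lemma~\ref{Lem_2} with $f(k)=p_k$ but with the measure $\mu(A)=\sum_{k\in A}p_k$ --- the distribution $p$ itself --- rather than counting measure. With that choice $\norm{f}_{L^r(\mu)}^r=\sum_{k}p_k^{\,r}\,p_k=\sum_k p_k^{\,1+r}$, so $\log\norm{f}_{L^{1/\beta}(\mu)}=\beta\log\sum_k p_k^{1+1/\beta}=\mathcal{G}_\beta(p)$ exactly, and the conclusion of Lemma~\ref{Lem_2} (log-convexity of the norm in the variable $1/r=\beta$) is literally the statement of the corollary, with no reparameterization at all. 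Your counting-measure choice gives $\norm{f}_r=\bigl(\sum_k p_k^r\bigr)^{1/r}$, whose exponent is off by one from what is needed, and you then have to absorb both the non-affine substitution $\beta\mapsto\beta/(\beta+1)$ and the prefactor $\beta+1$ through the perspective transform. That step is valid: $\Phi$ is convex on all of $(0,1)$ since $f\in L^r$ for every $r>1$ here, the affine image $\set{(\beta,\beta+1):\beta>0}$ stays in the convex region $t>0$, $s/t\in(0,1)$, and the perspective $t\,\Phi(s/t)$ of a convex $\Phi$ is jointly convex there. So your proof is complete, but it imports an extra standard fact (joint convexity of the perspective) that the paper's choice of measure makes unnecessary; the only thing to gain from your version is the observation that the perspective construction is exactly the right tool when one insists on the counting-measure normalization.
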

\begin{proof}
Follows from Lemma \ref{Lem_2} by setting $\mathcal  X = \set{1,\dots,N}$, $\mu(A) = \sum_{k\in A} p_k$, $f(k) = p_k$, $k\in \mathcal  X$.
\end{proof}

\begin{remark}
  It follows immediately from \eqref{eq_4} that   for the function
$$\mathcal {G}_\beta(p) = \beta \log \sum_{k=1}^N p_k^{1+1/\beta},\;\beta>0$$
then $\mathcal {H}_\alpha(p)=\mathcal {G}_{\frac{1}{\alpha-1}}(p)$. For $\alpha>1 \; \frac{1}{\alpha-1}$ is convex. If it happened that there is such $p$ that $G_{\cdot}(p)$ is non-decreasing on an interval then $\mathcal {G}_{\frac{1}{\alpha-1}}(p)$ be  convex on that interval and $ \mathcal {H}_\alpha(p)$ be convex, too. However,
\begin{align*}\begin{gathered}
\mathcal {G}'_{\beta}(p)  = \log \sum_{k=1}^N p_k^{1+1/\beta}-\frac{1}{\beta}\frac{\sum_{k=1}^N p_k^{1+1/\beta}\log p_k}{\sum_{k=1}^N p_k^{1+1/\beta}}\\
= -\sum_{k=1}^N \frac{p_k^{1+1/\beta}}{\sum_{k=1}^N p_k^{1+1/\beta}} \log \frac{p_k^{ 1/\beta}}{ \sum_{k=1}^N p_k^{1+1/\beta}}\leq 0.
\end{gathered}\end{align*}
In some sense, this is a reason why we can not say something definite concerning the 2nd derivative of entropy either on the whole semiaxes or even in the interval $[1,+\infty).$
\end{remark}

\subsection{Graphs of $\mathcal{H}_\alpha(p)$ and it's second derivative of several probability distributions}

\begin{figure}[b]
\begin{center}
\begin{subfigure}[h]{0.45\linewidth}
\includegraphics[width=1\linewidth]{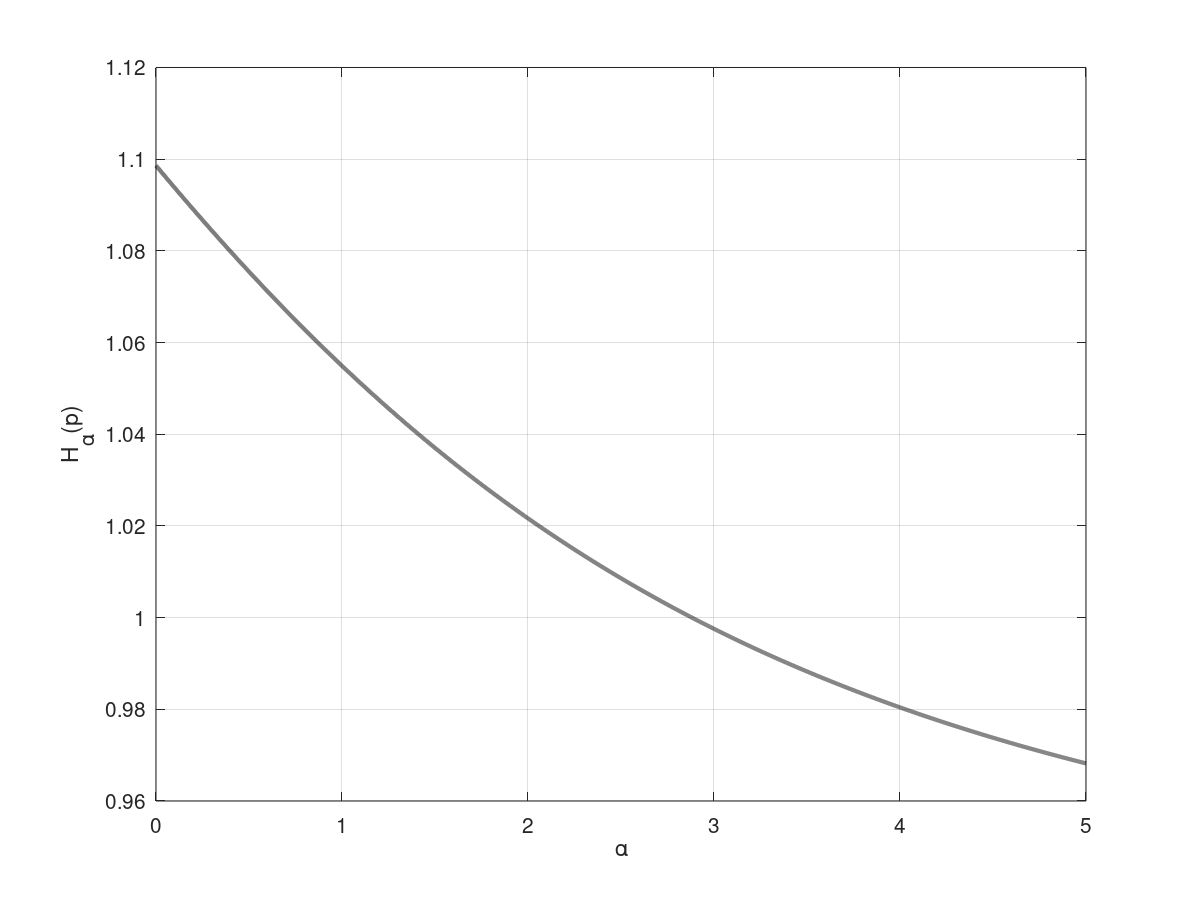}\label{fig:conv}
\end{subfigure}%
\begin{subfigure}[h]{0.45\linewidth}
\includegraphics[width=1\linewidth]{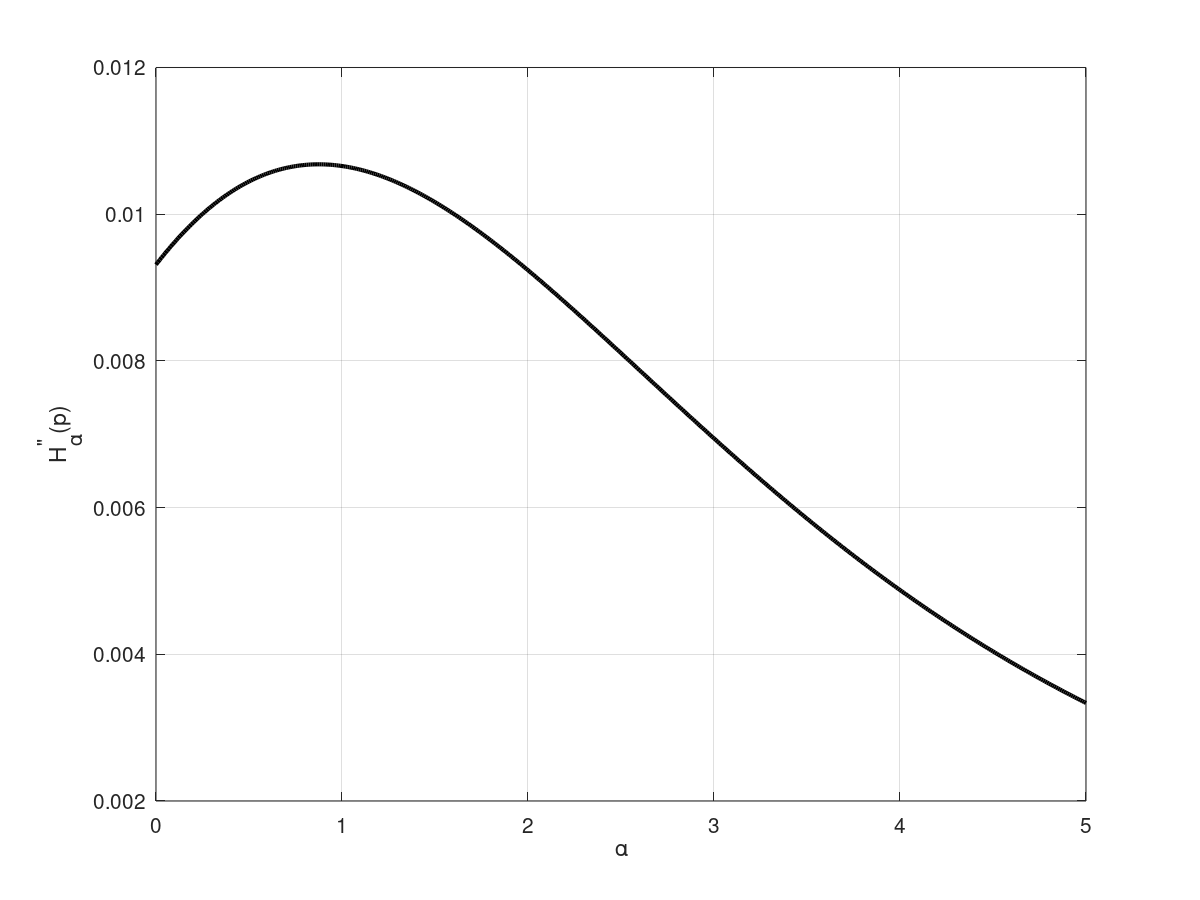}\label{fig:conv_der}
\end{subfigure}
\caption{Graph of $\mathcal{H}_\alpha(p)$ and  $\mathcal{H}^{''}_{\alpha}(p)$, where $p_1=p_{2}=0.4, p_{3}=0.2$. Here $\mathcal{H}_\alpha(p)$ is convex as a function of $\alpha>0$.}
\end{center}
\end{figure}

\begin{figure}[h]
\begin{center}
\begin{subfigure}[h]{0.45\linewidth}
\includegraphics[width=1\linewidth]{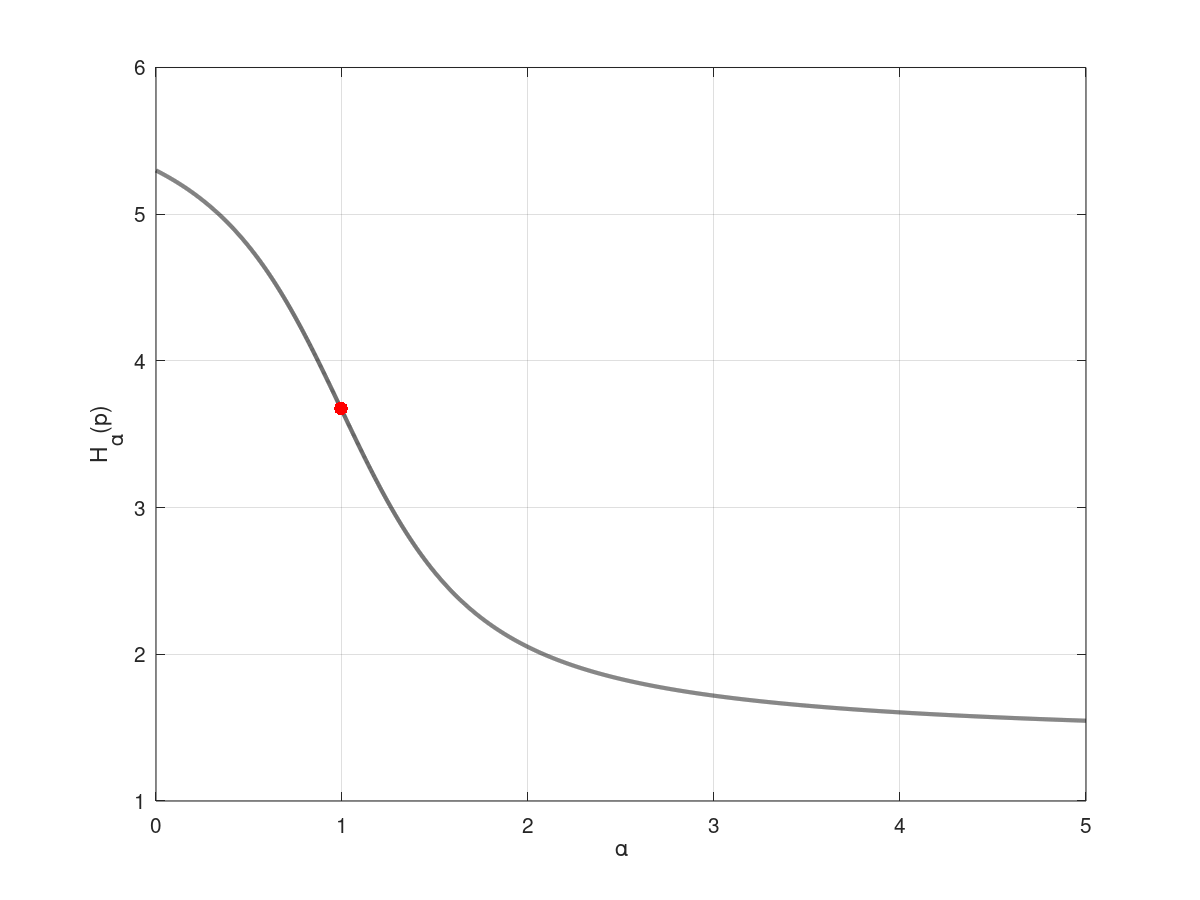}\label{fig:conc}
\end{subfigure}%
\begin{subfigure}[h]{0.45\linewidth}
\includegraphics[width=1\linewidth]{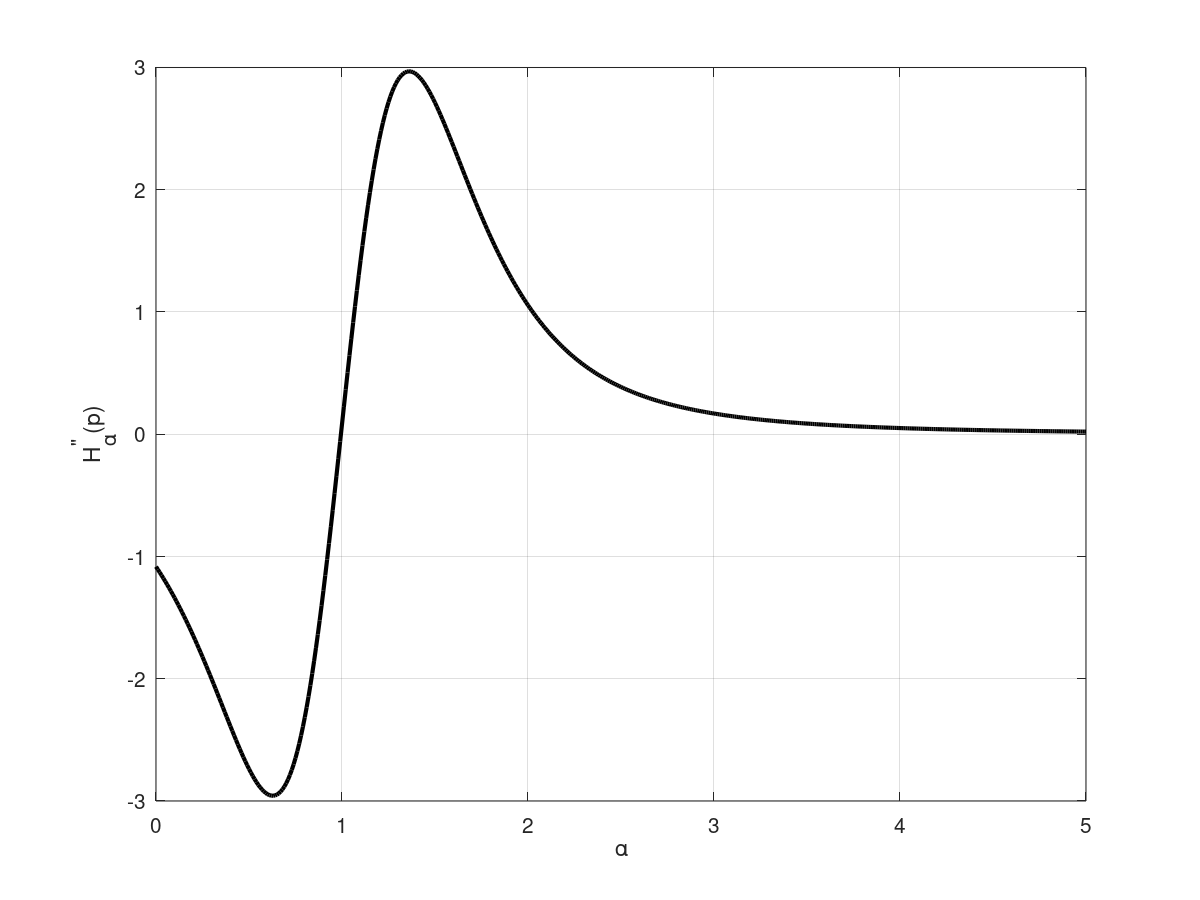}\label{fig:conc_der}
\end{subfigure}
\caption{Graph of $\mathcal{H}_\alpha(p)$ and  $\mathcal{H}^{''}_{\alpha}(p)$, where $p_1=...=p_{198}=\frac{1}{400}, p_{199}=p_{200}=\frac{101}{400}$. Dot is the point where $H_\alpha^{''}(p)=0$  and this point is $\alpha=0.99422$.}
\end{center}
\end{figure}

\begin{figure}[h]
\begin{center}
\begin{subfigure}[h]{0.45\linewidth}
\includegraphics[width=1\linewidth]{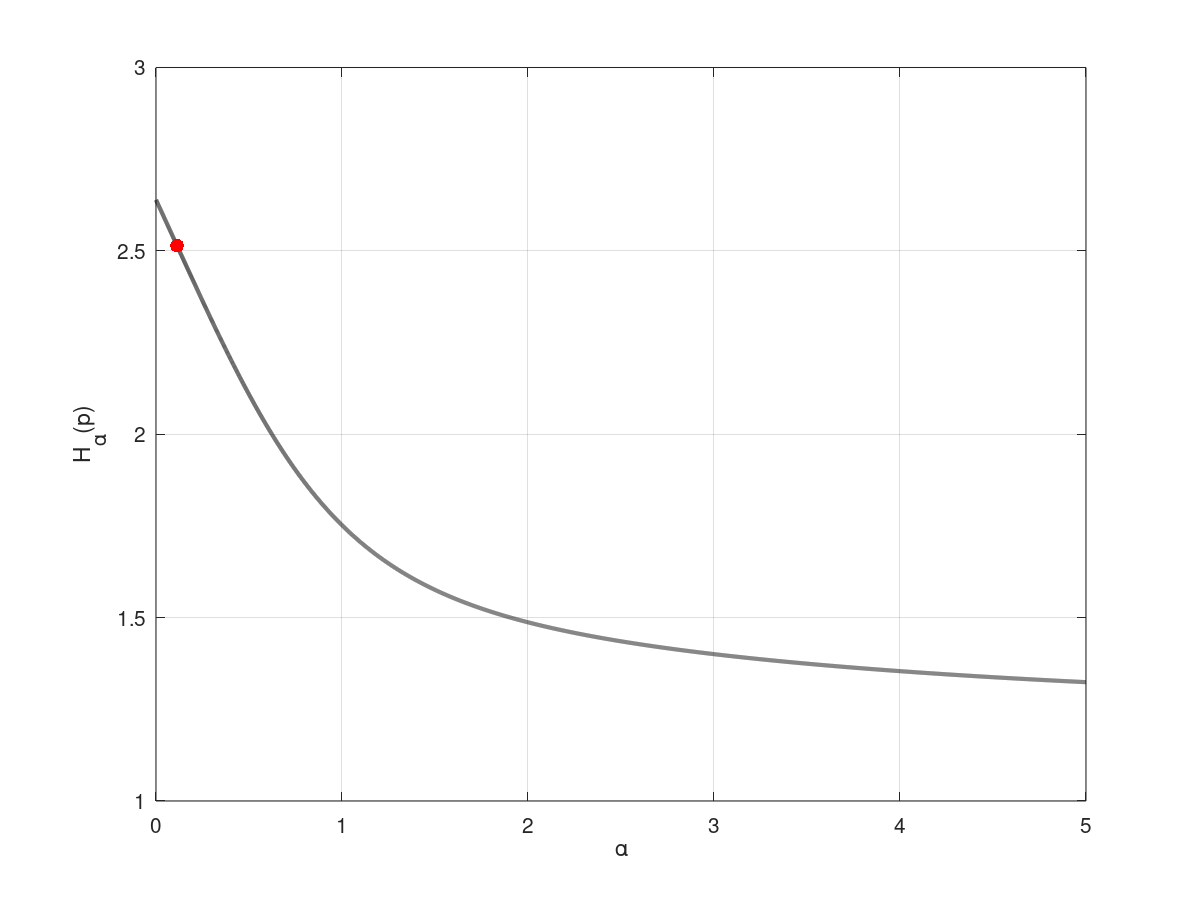}\label{fig:b1}
\end{subfigure}%
\begin{subfigure}[h]{0.45\linewidth}
\includegraphics[width=1\linewidth]{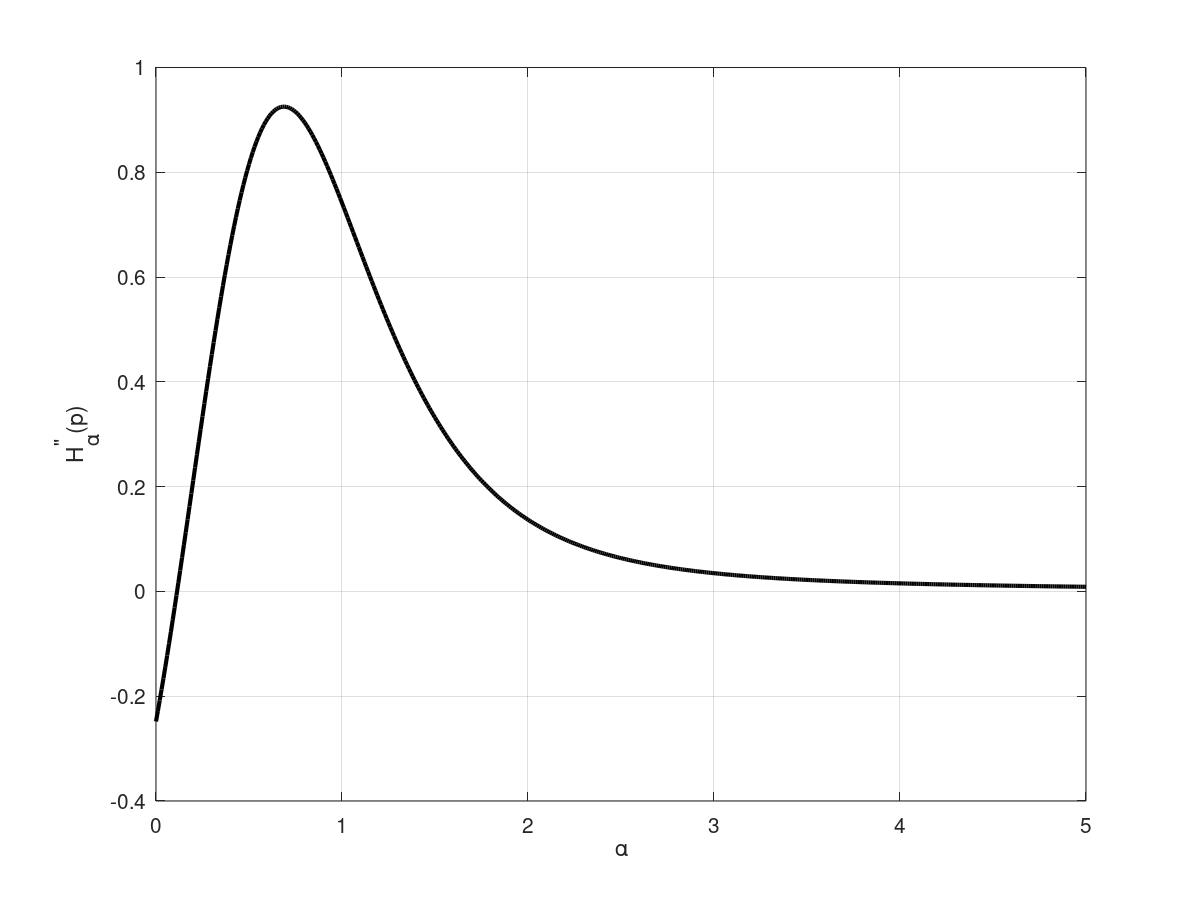}\label{fig:b1_der}
\end{subfigure}
\caption{Graph of $\mathcal{H}_\alpha(p)$ and $\mathcal{H}^{''}_{\alpha}(p)$, where $p_1=...=p_{10}=0.01, p_{11}=p_{12}=0.15, p_{13}=p_{14}=0.3$. Here second derivative becomes positive long before point 1 (at point 0.11318).}
\end{center}
\end{figure}

\begin{figure}[h]
\begin{center}
\begin{subfigure}[h]{0.45\linewidth}
\includegraphics[width=1\linewidth]{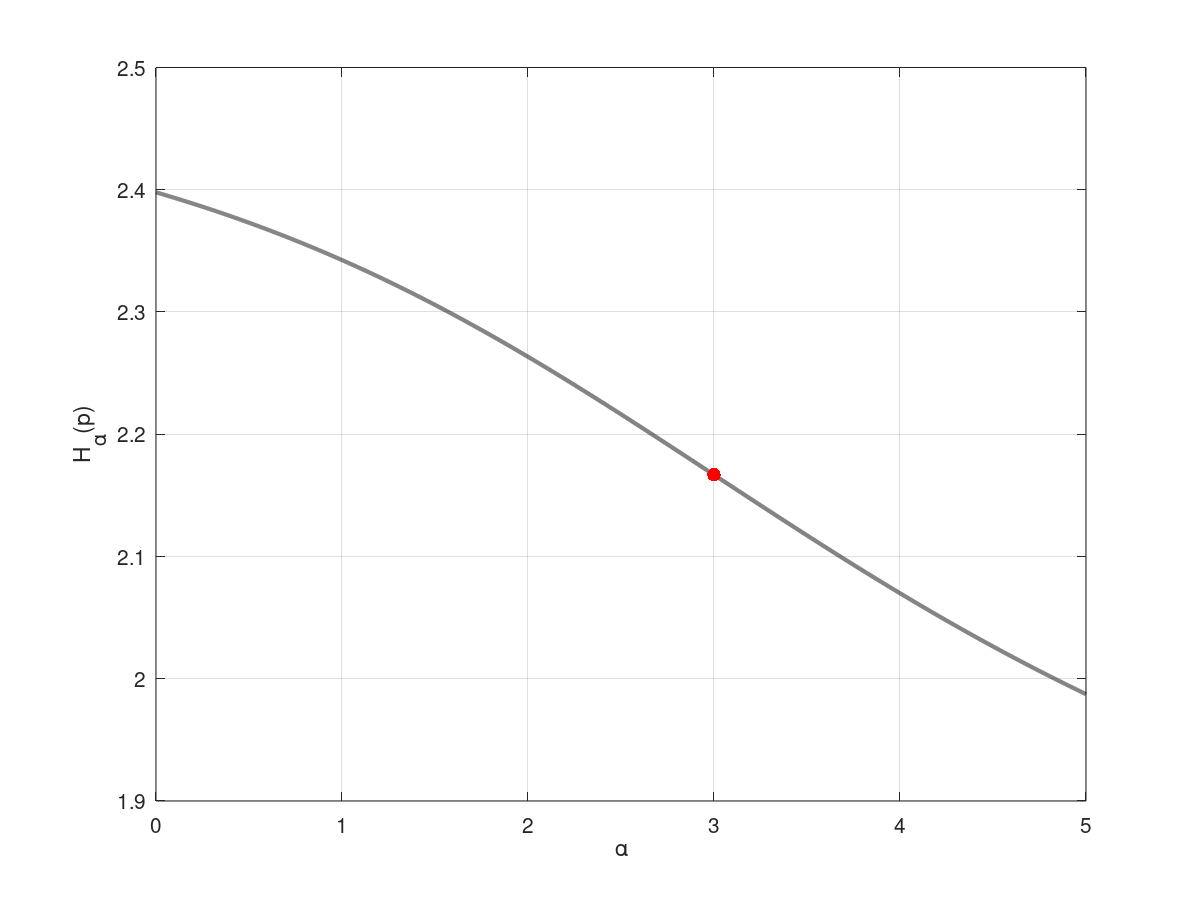}\label{fig:a1}
\end{subfigure}%
\begin{subfigure}[h]{0.45\linewidth}
\includegraphics[width=1\linewidth]{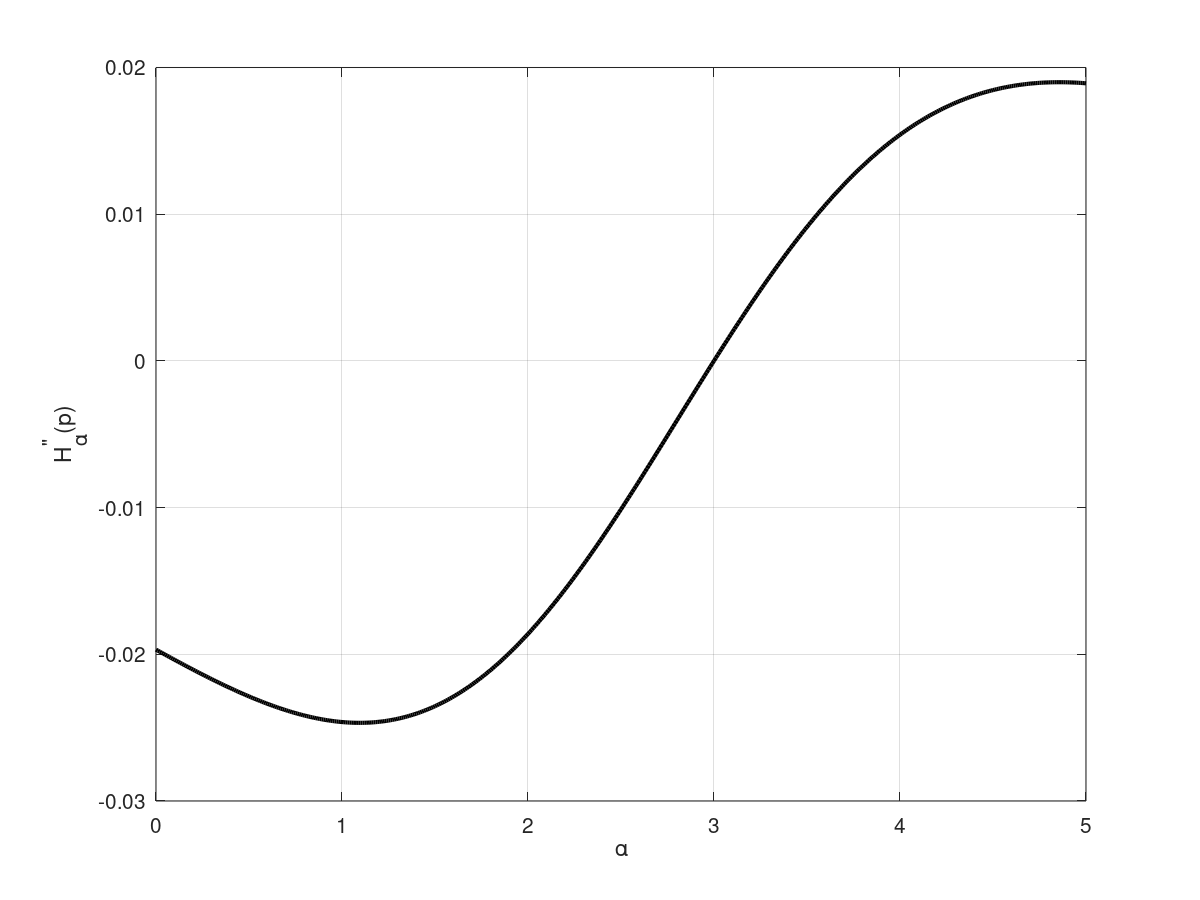}\label{fig:a1_der}
\end{subfigure}
\caption{Graph of $\mathcal{H}_\alpha(p)$ and $\mathcal{H}^{''}_{\alpha}(p)$, where $p_1=...=p_{10}=0.08, p_{11}=0.2$. Here second derivative becomes positive after point 1 (at point 2.9997).}
\end{center}
\end{figure}

\begin{figure}[h]
\begin{center}
\begin{subfigure}[h]{0.45\linewidth}
\includegraphics[width=1\linewidth]{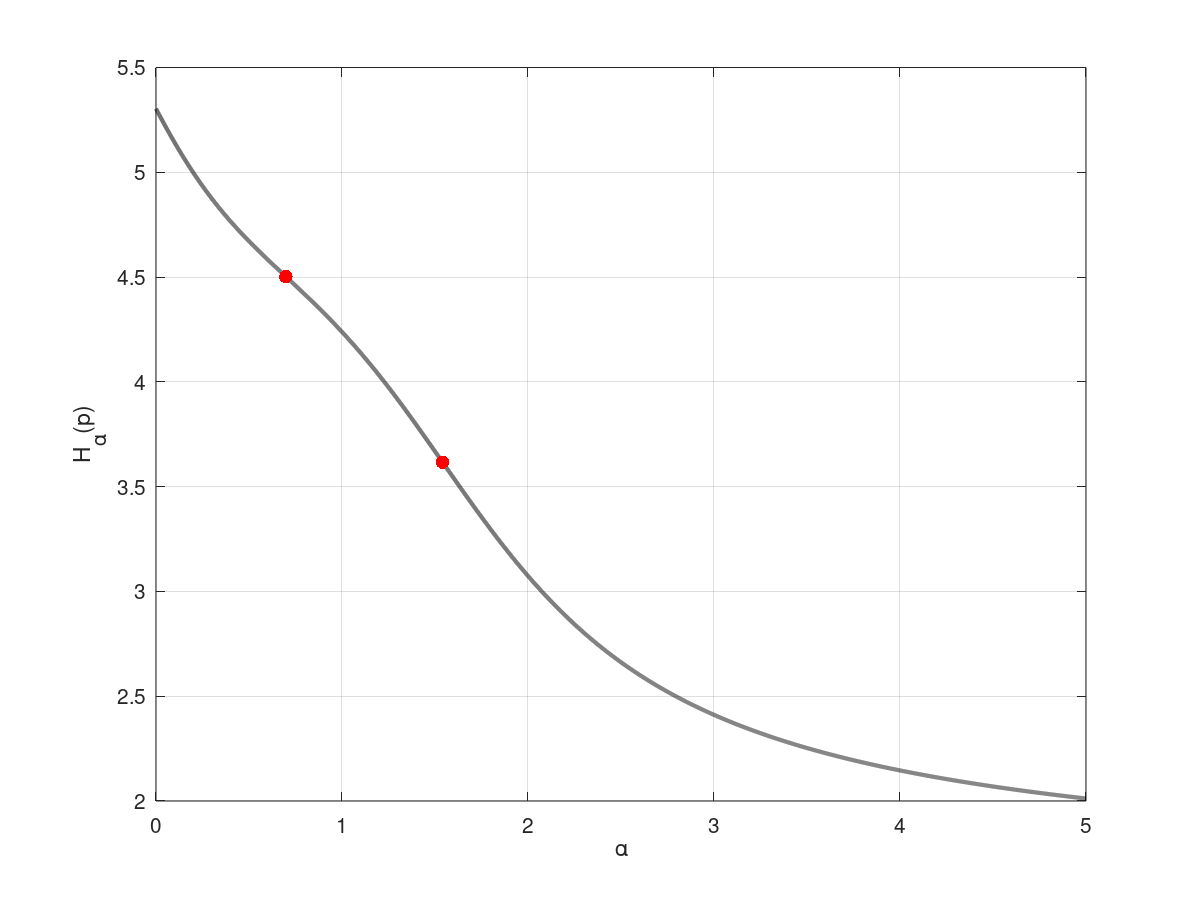}\label{fig:2infl}
\end{subfigure}%
\begin{subfigure}[h]{0.45\linewidth}
\includegraphics[width=1\linewidth]{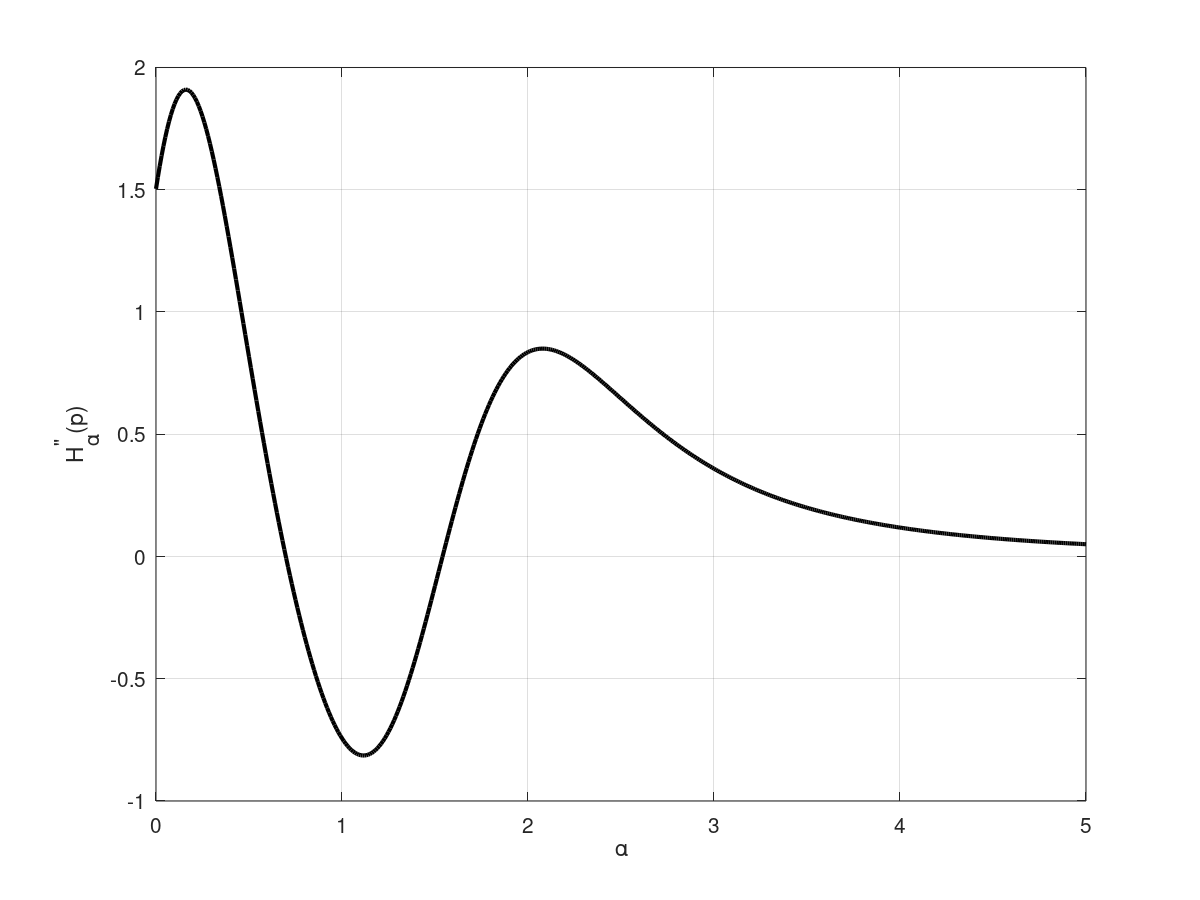}\label{fig:2infl_der}
\end{subfigure}
\caption{Graph of $\mathcal{H}_\alpha(p)$ and $\mathcal{H}^{''}_{\alpha}(p)$, where $p_1=...=p_{100}=0.0001, p_{101}=...=p_{200}=0.0079, p_{201}=0.2$. Here second derivative has two zeros.}
\end{center}
\end{figure}

\begin{figure}[h]
\begin{center}
\begin{minipage}[h]{0.6\linewidth}
\includegraphics[width=1\linewidth]{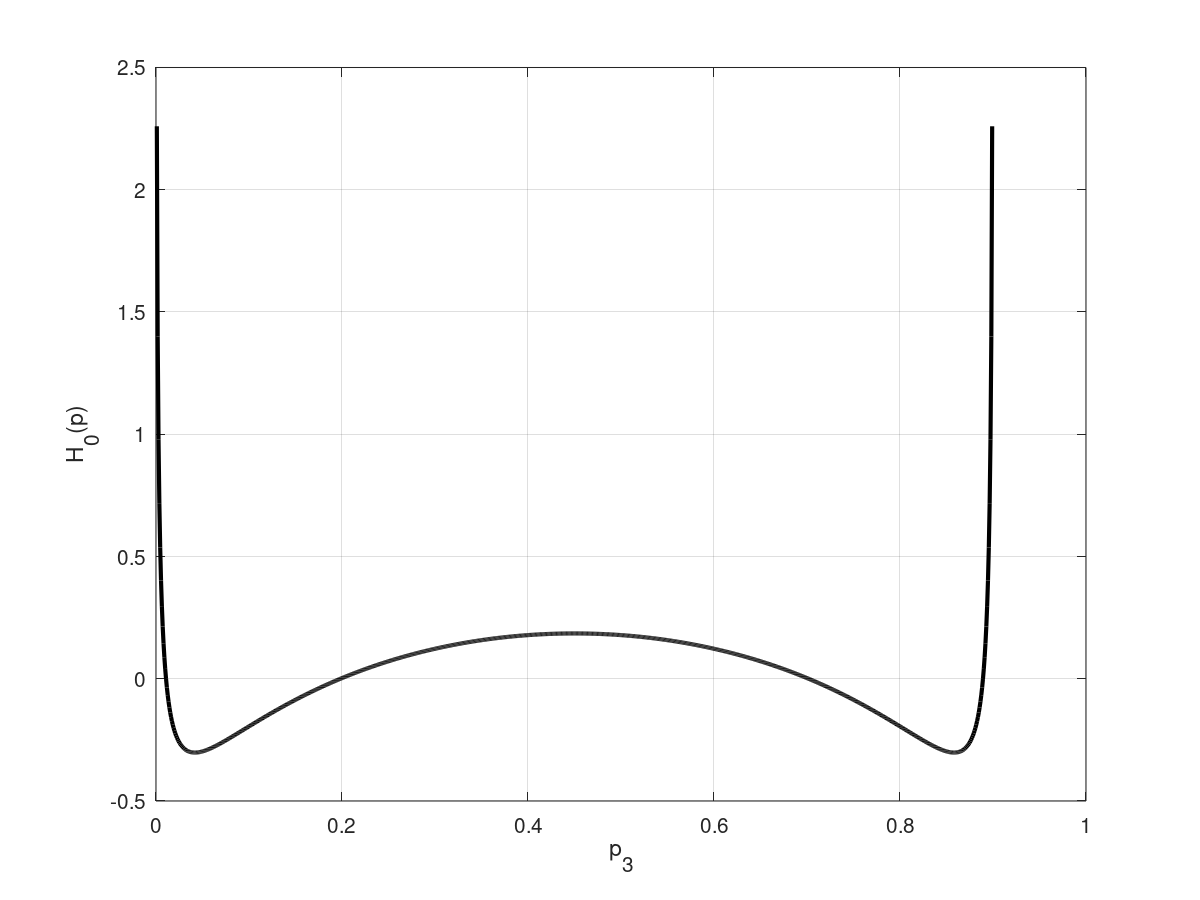}\label{fig:2fixed}
\caption{Graph of $\mathcal{H}_\alpha(p)$, where $p_1=p_2=0.05, p_3$ is changing from 0 to 0.9 and $p_4=1-p_1-p_2-p_3$.}
\end{minipage}
\end{center}
\end{figure}

\clearpage
\section{ Robustness of the   R\'{e}nyi entropy}\label{sec_3}
Now we study the asymptotic behavior of the   R\'{e}nyi entropy depending on the behavior of the involved probabilities.The first problem is the stability of the entropy w.r.t. involved probabilities and the rate of its convergence to the limit value when probabilities tend to their limit value with the fixed rate.
\subsection{Rate of convergence of the disturbed  entropy when the initial distribution is arbitrary but fixed}
Let's look at distributions that are ``near'' some fixed distribution $p=(p_k, \; 1\leq k \leq N)$ and   construct     the approximate distribution $p(\epsilon)=(p_k(\epsilon), \; 1\leq k \leq N)$ as follows. Now we can assume that some probabilities are zero, and we shall  see that this assumption influences the rate of convergence of the   R\'{e}nyi entropy to the limit value. So, let $0\leq N_1< N$ be a number of zero probabilities, and for them we consider approximate values of the form    $p_k(\epsilon)=c_k\varepsilon, \; 0\leq c_k\leq 1,\; 1\leq k\leq N_1$. Further, let $N_2=N-N_1\geq1$ be a number of non-zero probabilities, and for them we consider approximate values of the form  $p_k(\epsilon)=p_k+c_k\varepsilon, \; |c_k| \leq 1,\; N_1+1\leq k\leq N$, where $c_1+...+c_N=0$ and $ \varepsilon\leq\min\limits_{N_1+1\leq k\leq N}p_k$. Assume also  that there exists $  k\leq N$ such that  $c_k\neq0$, otherwise $  \mathcal {H}_\alpha(p)- \mathcal {H}_\alpha(p(\epsilon))=0$. So, we disturb intial probabilities linearly in $\varepsilon$ with different weights whose sum should necessarily be zero. These assumptions supply that $0\le p_k(\epsilon)\le 1$ and $ p_1(\epsilon)+...+p_N(\epsilon)=1$. Now we want to find out how entropy of the disturbed distribution will differ from the initial entropy, depending on parameters $\varepsilon, N$ and $\alpha$. We start with $\alpha=1.$

\begin{theorem}\label{epstozero1}
Let number $N$ and coefficients  $c_1,...,c_N$ be fixed, and let $\alpha=1$. We have three  different situations:
\begin{itemize}
\item[$(i)$] Let $N_1\geq1$ and there  exists $k\leq N_1$ such that $c_k\neq0.$ Then $$\mathcal{H}_1(p)-\mathcal{H}_1(p(\epsilon)) \sim \varepsilon\log\varepsilon\sum_{k=1}^{N_1} c_k, \; \varepsilon\to0.$$
\item[$(ii)$] Let for all $k\leq N_1 \; c_k=0$ and $\sum_{k=N_1+1}^{N} c_k\log p_k\neq0.$ Then $$\mathcal{H}_1(p)-\mathcal {H}_1(p(\epsilon)) \sim \varepsilon\sum_{k=N_1+1}^{N} c_k\log p_k, \; \varepsilon\to0.$$
\item[$(iii)$] Let for all $k\leq N_1 \; c_k=0$ and $\sum_{k=N_1+1}^{N} c_k\log p_k=0.$ Then $$\mathcal{H}_1(p)-\mathcal {H}_1(p(\epsilon)) \sim \frac{\varepsilon^2}{2}\sum_{k=N_1+1}^{N} \frac{c_k^2}{p_k}, \; \varepsilon\to0.$$
\end{itemize}
\end{theorem}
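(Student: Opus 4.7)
The approach is a direct Taylor expansion of $\mathcal{H}_1(p(\varepsilon))$ in $\varepsilon$, splitting the sum at the index $N_1$ so that the singular ``newborn'' terms $p_k(\varepsilon)=c_k\varepsilon$ for $k\leq N_1$ are handled separately from the smooth perturbations $p_k+c_k\varepsilon$ for $k>N_1$. For the first block, $-p_k(\varepsilon)\log p_k(\varepsilon)=-c_k\varepsilon\log c_k-c_k\varepsilon\log\varepsilon$ exactly (with the convention $0\log 0=0$ absorbing any $c_k=0$). For the second block, the map $x\mapsto x\log x$ is analytic at $p_k>0$, so a standard second-order Taylor expansion yields
\[
(p_k+c_k\varepsilon)\log(p_k+c_k\varepsilon)=p_k\log p_k+(1+\log p_k)c_k\varepsilon+\frac{c_k^2}{2p_k}\varepsilon^2+O(\varepsilon^3),
\]
with remainder uniform in $k>N_1$ since $\min_{k>N_1}p_k$ is a fixed positive number.

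Assembling everything, the $p_k\log p_k$ contributions from the second block cancel $\mathcal{H}_1(p)$, and after invoking the mass constraint $\sum_{k=1}^N c_k=0$ to rewrite the bare $\varepsilon\sum_{k>N_1}c_k$ as $-\varepsilon\sum_{k\leq N_1}c_k$, I obtain a single master expansion of the schematic shape
\[
\mathcal{H}_1(p)-\mathcal{H}_1(p(\varepsilon))=A\,\varepsilon\log\varepsilon+B\,\varepsilon+C\,\varepsilon^2+O(\varepsilon^3),
\]
with $A=\sum_{k\leq N_1}c_k$, $B=\sum_{k\leq N_1}c_k(\log c_k-1)+\sum_{k>N_1}c_k\log p_k$, and $C=\tfrac{1}{2}\sum_{k>N_1}c_k^2/p_k$. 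The three claims then follow by inspecting which coefficient is the first to survive. In $(i)$, the sign restriction $0\leq c_k\leq 1$ on the newborn block promotes ``some $c_k\neq 0$ for $k\leq N_1$'' to $A>0$, and $A\varepsilon\log\varepsilon$ dominates every $O(\varepsilon)$ quantity because $|\log\varepsilon|\to\infty$. In $(ii)$ the assumption forces $A=0$ and eliminates the first piece of $B$, leaving $B=\sum_{k>N_1}c_k\log p_k\neq 0$ as the leading coefficient. In $(iii)$ a further hypothesis kills $B$, and $C>0$ because the standing requirement ``some $c_k\neq 0$'' must now be realised at an index $k>N_1$.

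\textbf{Where care is needed.} The computation is essentially routine; the only delicate step is the bookkeeping of the linear-in-$\varepsilon$ terms, where the bare $c_k\varepsilon$ contribution coming from the Taylor expansion in the second block must be combined with the first block through $\sum c_k=0$ — this is precisely what makes $B$ collapse to the clean expression $\sum_{k>N_1}c_k\log p_k$ in case $(ii)$ rather than to something messier. No analytic difficulty is expected, since the $O(\varepsilon^3)$ remainder is uniform on the compact parameter range and all sums are finite.
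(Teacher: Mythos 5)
Your proposal is correct and follows essentially the same route as the paper: split the sum at $N_1$, treat the newborn terms $c_k\varepsilon\log(c_k\varepsilon)$ exactly, Taylor-expand $(p_k+c_k\varepsilon)\log(p_k+c_k\varepsilon)$ to second order for $k>N_1$, and use $\sum_k c_k=0$ to identify the leading surviving coefficient in each case. Your single ``master expansion'' with coefficients $A,B,C$ is just a tidier packaging of the paper's case-by-case limit computations, and your observations that $A>0$ in case $(i)$ and $C>0$ in case $(iii)$ correctly justify the asymptotic equivalences.
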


\begin{proof}
First of all, we will find asymptotic behavior of two auxiliary functions as $\varepsilon \to 0$.
First, let $  0\leq c_k\leq1$. Then
$$c_k\varepsilon\log(c_k\varepsilon) = c_k\varepsilon\log\varepsilon + c_k\varepsilon\log c_k = c_k\varepsilon\log\varepsilon + o(\varepsilon\log\varepsilon), \; \varepsilon \to 0.$$
Second, let $p_k>0, \; |c_k|\leq1$. Taking into account Taylor expansion of logarithm
\begin{align*}
\log(1+x)=x-\frac{x^2}{2}+o(x^2), \; x\to 0,
\end{align*}
we can write:
\begin{equation}\begin{gathered}\label{eq_4.}
(p_k+c_k\varepsilon)\log(p_k+c_k\varepsilon)  - p_k \log p_k= c_k\varepsilon\log p_k +  (p_k+c_k\varepsilon)\log(1+c_kp_k^{-1}\varepsilon)
\\  =
c_k\varepsilon\log  p_k + (p_k+c_k\varepsilon)\left(c_kp_k^{-1}\varepsilon - \frac{1}{2}(c_kp_k^{-1}\varepsilon)^2 + o(\varepsilon^2)\right)
\\  = \varepsilon(c_k\log p_k + c_k)+ \varepsilon^2\left(c_k^2p_k^{-1}-\frac12c_k^2p_k^{-1}\right)+o(\varepsilon^2)
\\  =\varepsilon(c_k\log p_k + c_k)+ \frac{c_k^2\varepsilon^2}{2p_k} +o(\varepsilon^2), \; \varepsilon \to 0.
\end{gathered}\end{equation}
In particular, we immediately  get from \eqref{eq_4.} that
\begin{align*}
(p_k+c_k\varepsilon)\log(p_k+c_k\varepsilon) - p_k \log p_k= o(\varepsilon\log\varepsilon), \; \varepsilon\to0,
\end{align*}
and
\begin{align*}
(p_k+c_k\varepsilon)\log(p_k+c_k\varepsilon) - p_k \log p_k=
\varepsilon(c_k\log p_k + c_k)+ o(\varepsilon), \; \varepsilon\to0
\end{align*}
Now simply observe the following.
\begin{align*}
(i) \lim_{\varepsilon\to0}\frac{\mathcal{H}_{1}(p)-\mathcal{H}_{1}(p(\epsilon))}{\varepsilon\log\varepsilon}&=
\lim_{\varepsilon\to0}\frac{1}{\varepsilon\log\varepsilon}\sum_{k=1}^{N_1}c_k\varepsilon\log c_k\varepsilon
\\ &+
\frac{1}{\varepsilon\log\varepsilon}\sum_{k=N_1+1}^{N} ((p_k+c_k\varepsilon)\log(p_k+c_k\varepsilon) - p_k \log p_k)
\\ &=
\lim_{\varepsilon\to0}\frac{1}{\varepsilon\log\varepsilon}\left(\sum_{k=1}^{N_1} (c_k\varepsilon\log\varepsilon+o(\varepsilon\log\varepsilon)) + \sum_{k=N_1+1}^{N} o(\varepsilon\log\varepsilon)\right)
\\ &=
\sum_{k=1}^{N_1} c_k.
\end{align*}
$(ii)$ Since for any   $  k\leq N_1$ we have that $c_k=0$ and the total sum $c_1+...+c_N=0$ then $c_{N_1+1}+...+c_N=0$. Furthermore, in this case
\begin{align*}
 \lim_{\varepsilon\to0}\frac{\mathcal{H}_{1}(p)-\mathcal{H}_{1}(p(\epsilon))}{\varepsilon}&=
\lim_{\varepsilon\to0}\frac{1}{\varepsilon}\sum_{k=N_1+1}^{N}((p_k+c_k\varepsilon)\log(p_k+c_k\varepsilon) - p_k \log p_k)
\\ &=
\lim_{\varepsilon\to0}\frac{1}{\varepsilon}\sum_{k=N_1+1}^{N}(\varepsilon(c_k\log p_k + c_k)+o(\varepsilon))
\\ &=
\sum_{k=N_1+1}^{N}(c_k\log p_k + c_k) = \sum_{k=N_1+1}^{N}c_k\log p_k.
\end{align*}
$(iii)$ In this case we have the following relations:
\begin{align*}
 \lim_{\varepsilon\to0}\frac{\mathcal{H}_{1}(p)-\mathcal{H}_{1}(p(\epsilon))}{\varepsilon^2}&=
\lim_{\varepsilon\to0}\frac{1}{\varepsilon^2}\sum_{k=N_1+1}^{N}((p_k+c_k\varepsilon)\log(p_k+c_k\varepsilon) - p_k \log p_k)
\\ &=
\lim_{\varepsilon\to0}\frac{1}{\varepsilon^2}\sum_{k=N_1+1}^{N}(\varepsilon(c_k\log p_k + c_k)+ \frac{c_k^2\varepsilon^2}{2p_k} +o(\varepsilon^2))
\\ &=
\lim_{\varepsilon\to0}\frac{1}{\varepsilon^2}\sum_{k=N_1+1}^{N}\left(\frac{c_k^2\varepsilon^2}{2p_k} +o(\varepsilon^2)\right)
= \frac12\sum_{k=N_1+1}^{N}\frac{c_k^2}{p_k}.
\end{align*}
Theorem is proved.
\end{proof}
Now we proceed with $\alpha<1$.
\begin{theorem}\label{epstozeroless1}
Let number $N$ and coefficients $c_1,...,c_N$ be fixed, and let $\alpha<1$. Then we have three  different situations:
\begin{itemize}
\item[$(i)$] Let $N_1\geq1$ and there exists $k\leq N_1$ such that $c_k\neq0.$ Then $$\mathcal{H}_\alpha(p)-\mathcal{H}_\alpha(p(\epsilon)) \sim \frac{\varepsilon^\alpha}{\alpha-1} \left(\sum_{k=1}^{N_1} c_k^\alpha\right) \left(\sum_{k=N_1+1}^{N} p_k^\alpha\right)^{-1}, \; \varepsilon\to0.$$
\item[$(ii)$] Let for all $k\leq N_1 \; c_k=0$ and $\sum_{k=N_1+1}^{N} c_k p_k^{\alpha-1}\neq0.$ Then $$\mathcal{H}_\alpha(p)-\mathcal{H}_\alpha(p(\epsilon)) \sim \frac{\alpha\varepsilon}{\alpha-1} \left(\sum_{k=N_1+1}^{N} c_k p_k^{\alpha-1}\right)\left(\sum_{k=N_1+1}^{N} p_k^\alpha\right)^{-1} , \; \varepsilon\to0.$$
\item[$(iii)$] Let for all $k\leq N_1 \; c_k=0$ and $\sum_{k=N_1+1}^{N} c_k p_k^{\alpha-1}=0.$ Then $$\mathcal{H}_\alpha(p)-\mathcal{H}_\alpha(p(\epsilon)) \sim \frac{\alpha\varepsilon^2}{2}\left(\sum_{k=N_1+1}^{N} c_k^2 p_k^{\alpha-2}\right)\left(\sum_{k=N_1+1}^{N} p_k^\alpha\right)^{-1}, \; \varepsilon\to0.$$
\end{itemize}
\end{theorem}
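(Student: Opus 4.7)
The plan is to work directly with the ratio of power sums. Writing $S_\alpha(q)=\sum_{k=1}^N q_k^\alpha$ for any non-negative vector $q$, one has
\begin{equation*}
\mathcal{H}_\alpha(p) - \mathcal{H}_\alpha(p(\epsilon)) = \frac{1}{1-\alpha}\log\frac{S_\alpha(p)}{S_\alpha(p(\epsilon))},
\end{equation*}
so the task reduces to expanding $S_\alpha(p(\epsilon))$ to the correct order in $\varepsilon$ and then applying $\log(1+x) = x + O(x^2)$ as $x\to 0$.

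First I would split $S_\alpha(p(\epsilon))$ into the contribution from the originally zero probabilities $k\leq N_1$, each of which yields $(c_k\varepsilon)^\alpha = c_k^\alpha \varepsilon^\alpha$ (or $0$ when $c_k=0$), and the contribution from the originally positive probabilities $k>N_1$, for which the Taylor expansion
\begin{equation*}
(p_k + c_k\varepsilon)^\alpha = p_k^\alpha + \alpha c_k p_k^{\alpha-1}\varepsilon + \tfrac{\alpha(\alpha-1)}{2} c_k^2 p_k^{\alpha-2}\varepsilon^2 + O(\varepsilon^3)
\end{equation*}
is valid uniformly since each such $p_k$ is strictly positive. Setting $A = S_\alpha(p)$, $B = \sum_{k\leq N_1} c_k^\alpha$ and $C = \alpha \sum_{k>N_1} c_k p_k^{\alpha-1}$, this yields
\begin{equation*}
S_\alpha(p(\epsilon)) = A + B\varepsilon^\alpha + C\varepsilon + \tfrac{\alpha(\alpha-1)}{2}\Bigl(\sum_{k>N_1} c_k^2 p_k^{\alpha-2}\Bigr)\varepsilon^2 + O(\varepsilon^3).
\end{equation*}

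The decisive observation is the ordering $\varepsilon^\alpha \gg \varepsilon \gg \varepsilon^2$ as $\varepsilon\to 0+$ in the regime $\alpha\in(0,1)$, which dictates a case split mirroring Theorem \ref{epstozero1}. In case $(i)$ the hypothesis forces $B>0$, so $B\varepsilon^\alpha$ is the dominant correction; applying $\log(1+\cdot)\sim\cdot$ and multiplying by $\frac{1}{1-\alpha}$ produces the asserted $\frac{\varepsilon^\alpha}{\alpha-1}\cdot B/A$. In case $(ii)$, $B=0$ by hypothesis, so the linear term $C\varepsilon$ takes over and yields $\frac{\alpha\varepsilon}{\alpha-1}\cdot\bigl(\sum_{k>N_1} c_k p_k^{\alpha-1}\bigr)/A$. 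In case $(iii)$ both $B$ and $C$ vanish, the quadratic term governs, and the factor $\alpha(\alpha-1)/2$ cancels against the $\frac{1}{1-\alpha}$ in the prefactor to leave the advertised $\frac{\alpha\varepsilon^2}{2}\cdot\bigl(\sum_{k>N_1} c_k^2 p_k^{\alpha-2}\bigr)/A$.

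The only delicate point is verifying that all remainders are genuinely of smaller order than the chosen leading term. This is straightforward since $\varepsilon^3 = o(\varepsilon^2)$, and the $O(x^2)$ error from $\log(1+x)$ in case $(i)$ contributes only a term of size $\varepsilon^{2\alpha}$, which is $o(\varepsilon^\alpha)$ because $\alpha>0$. No single step is technically hard; the main care lies in the case split itself and in tracking signs so that the prefactor $1/(1-\alpha)$ combines correctly with the Taylor coefficients in each regime.
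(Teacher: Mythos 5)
Your proposal is correct. Every step checks out: the reduction to $\frac{1}{1-\alpha}\log\bigl(S_\alpha(p)/S_\alpha(p(\epsilon))\bigr)$, the single expansion $S_\alpha(p(\epsilon)) = A + B\varepsilon^\alpha + C\varepsilon + \tfrac{\alpha(\alpha-1)}{2}\bigl(\sum_{k>N_1} c_k^2 p_k^{\alpha-2}\bigr)\varepsilon^2 + O(\varepsilon^3)$, the ordering $\varepsilon^\alpha \gg \varepsilon \gg \varepsilon^2$ for $\alpha\in(0,1)$, and the bookkeeping of the $\log(1+x)=x+O(x^2)$ remainders (in case $(i)$ the $O(\varepsilon^{2\alpha})$ error is indeed $o(\varepsilon^\alpha)$; in case $(i)$ the hypothesis does force $B>0$ because the $c_k$ with $k\le N_1$ are constrained to $[0,1]$). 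The route is genuinely different from the paper's: the authors fix the expected order of decay in each case, form the quotient $\bigl(\mathcal{H}_\alpha(p)-\mathcal{H}_\alpha(p(\epsilon))\bigr)/\varepsilon^\gamma$ with $\gamma\in\{\alpha,1,2\}$, and apply L'Hospital's rule in $\varepsilon$, feeding in an expansion of the derivative $\alpha c_k(p_k+c_k\varepsilon)^{\alpha-1}$ separately in each of the three cases. Your version expands the power sum once, before taking logarithms, and reads off all three regimes from the same displayed formula; this makes the case split transparent (it is literally "which of $B$, $C$, $D$ is the first nonvanishing coefficient") and avoids re-running a differentiation argument three times. What the paper's L'Hospital computation buys in exchange is that it never needs to track the second-order error of the logarithm explicitly, since the quotient of derivatives handles the normalization automatically; but your direct expansion is, if anything, the cleaner and more uniform argument, and it extends with no extra work to the $\alpha>1$ cases of Theorem \ref{epstozeromore1}.
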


\begin{proof}
Similarly to proof of Theorem \ref{epstozero1}, we start with several  asymptotic relations  as $\varepsilon \to 0$.
Namely, let $p_k>0, \; |c_k|\leq1$. Taking into account Taylor expansion of $(1+x)^{\alpha}$ that has a form
\begin{align*}
(1+x)^{\alpha}=1+\alpha x +o(x), \; x\to 0,
\end{align*}
we can write:
\begin{equation}\begin{gathered}\label{equ_5}
\alpha c_k(p_k+c_k\varepsilon)^{\alpha-1} = \alpha c_k p_k^{\alpha-1} (1+c_k p_k^{-1}\varepsilon)^{\alpha-1}
\\  =
\alpha c_k p_k^{\alpha-1}(1+(\alpha-1)c_k p_k^{-1}\varepsilon + o(\varepsilon))
\\  =
\alpha c_k p_k^{\alpha-1} + \alpha(\alpha-1)c_k^2 p_k^{\alpha-2}\varepsilon + o(\varepsilon), \; \varepsilon \to 0.
 \end{gathered}\end{equation}
As a consequence, we   get the following asymptotic relations:
\begin{equation}\label{equ_6}
\alpha c_k(p_k+c_k\varepsilon)^{\alpha-1}= o(\varepsilon^{\alpha-1}), \; \varepsilon\to0,
\end{equation}
and
\begin{align*}
\alpha c_k(p_k+c_k\varepsilon)^{\alpha-1}= \alpha c_k p_k^{\alpha-1} + o(1), \; \varepsilon\to0
\end{align*}
$(i)$ Applying L’Hospital’s rule, we get:
\begin{align*}
 \lim_{\varepsilon\to0}\frac{H_{\alpha}(p)-H_{\alpha}(p(\epsilon))} {\varepsilon^\alpha}&=
\lim_{\varepsilon\to0}\frac{1}{(\alpha-1)\varepsilon^\alpha}
\log\left(\sum_{k=1}^{N_1}(c_k\varepsilon)^\alpha+\sum_{k=N_1+1}^{N} (p_k+c_k\varepsilon)^\alpha\right)
\\ &-
\frac{1}{(\alpha-1)\varepsilon^\alpha}\log\left(\sum_{k=1}^{N}p_k^\alpha\right)=
\frac{1}{\alpha-1}\lim_{\varepsilon\to0}\frac{1}{\alpha\varepsilon^{\alpha-1}}
\\ &\times
\frac{\sum_{k=1}^{N_1}\alpha c_k^\alpha\varepsilon^{\alpha-1}+\sum_{k=N_1+1}^{N} \alpha c_k(p_k+c_k\varepsilon)^{\alpha-1}} {\sum_{k=1}^{N_1}(c_k\varepsilon)^\alpha+\sum_{k=N_1+1}^{N} (p_k+c_k\varepsilon)^\alpha}
\\ &=
\frac{1}{\alpha-1}\lim_{\varepsilon\to0}
\frac{\sum_{k=1}^{N_1} c_k^\alpha+\varepsilon^{1-\alpha}\sum_{k=N_1+1}^{N} o(\varepsilon^{\alpha-1})} {\sum_{k=1}^{N_1}(c_k\varepsilon)^\alpha+\sum_{k=N_1+1}^{N} (p_k+c_k\varepsilon)^\alpha}
\\ &=
\frac{1}{\alpha-1}\left(\sum_{k=1}^{N_1} c_k^\alpha\right) \left(\sum_{k=N_1+1}^{N} p_k^\alpha\right)^{-1}.
\end{align*}
$(ii)$ In this case we can transform  the value under a limit as follows:
\begin{align*}
 \lim_{\varepsilon\to0}\frac{\mathcal{H}_{\alpha}(p)-\mathcal{H}_{\alpha}(p(\epsilon))}{\varepsilon}&=
\lim_{\varepsilon\to0}\frac{1}{(\alpha-1)\varepsilon} \left(\log\left(\sum_{k=N_1+1}^{N} (p_k+c_k\varepsilon)^\alpha\right) -\log\left(\sum_{k=1}^{N}p_k^\alpha\right)\right)
\\ &=
\frac{1}{\alpha-1}\lim_{\varepsilon\to0}\frac{\sum_{k=N_1+1}^{N} \alpha c_k(p_k+c_k\varepsilon)^{\alpha-1}}{\sum_{k=N_1+1}^{N} (p_k+c_k\varepsilon)^\alpha}
\\ &=
\frac{1}{\alpha-1}\lim_{\varepsilon\to0}\frac{\sum_{k=N_1+1}^{N} (\alpha c_k p_k^{\alpha-1} + o(1))}{\sum_{k=N_1+1}^{N} (p_k+c_k\varepsilon)^\alpha}
\\ &=
\frac{\alpha}{\alpha-1}\left(\sum_{k=N_1+1}^{N} c_k p_k^{\alpha-1}\right) \left(\sum_{k=N_1+1}^{N} p_k^\alpha\right)^{-1}.
\end{align*}
$(iii)$ Finally, in the 3rd case,
\begin{align*}
 \lim_{\varepsilon\to0}\frac{\mathcal{H}_{\alpha}(p)-\mathcal{H}_{\alpha}(p(\epsilon))}{\varepsilon^2}&=
\lim_{\varepsilon\to0}\frac{1}{(\alpha-1)\varepsilon^2} \left(\log\left(\sum_{k=N_1+1}^{N} (p_k+c_k\varepsilon)^\alpha\right) -\log\left(\sum_{k=1}^{N}p_k^\alpha\right)\right)
\\ &=
\frac{1}{\alpha-1}\lim_{\varepsilon\to0}\frac{1}{2\varepsilon}\frac{\sum_{k=N_1+1}^{N} \alpha c_k(p_k+c_k\varepsilon)^{\alpha-1}}{\sum_{k=N_1+1}^{N} (p_k+c_k\varepsilon)^\alpha}
\\ &=
\frac{1}{\alpha-1}\lim_{\varepsilon\to0}\frac{1}{2\varepsilon}\frac{\sum_{k=N_1+1}^{N} (\alpha c_k p_k^{\alpha-1} + \alpha(\alpha-1)c_k^2 p_k^{\alpha-2}\varepsilon + o(\varepsilon))}{\sum_{k=N_1+1}^{N} (p_k+c_k\varepsilon)^\alpha}
\\ &=
\frac{1}{\alpha-1}\lim_{\varepsilon\to0}\frac{1}{2\varepsilon}\frac{\sum_{k=N_1+1}^{N} (\alpha(\alpha-1)c_k^2 p_k^{\alpha-2}\varepsilon + o(\varepsilon))}{\sum_{k=N_1+1}^{N} (p_k+c_k\varepsilon)^\alpha}
\\ &=
\frac{\alpha}{2}\left(\sum_{k=N_1+1}^{N} c_k^2 p_k^{\alpha-2}\right) \left(\sum_{k=N_1+1}^{N} p_k^\alpha\right)^{-1}.
\end{align*}
Theorem is proved.
\end{proof}
Now we conclude with $\alpha>1$. In this case, five different asymptotics are possible.
\begin{theorem}\label{epstozeromore1}
Let number  $N$ and coefficients $c_1,...,c_N$ be fixed,  and let $\alpha>1$. Then     five  different situations are possible:
\begin{itemize}
\item[$(i)$] Let $\sum_{k=N_1+1}^{N} c_k p_k^{\alpha-1}\neq0.$ Then whatever $N_1\geq 0$ and $\alpha>1$ are equal,   we have that $$\mathcal{H}_\alpha(p)-\mathcal{H}_\alpha(p(\epsilon)) \sim \frac{\alpha\varepsilon}{\alpha-1} \left(\sum_{k=N_1+1}^{N} c_k p_k^{\alpha-1}\right)\left(\sum_{k=N_1+1}^{N} p_k^\alpha\right)^{-1} , \; \varepsilon\to0.$$
\item[$(ii)$] Let  $\sum_{k=N_1+1}^{N} c_k p_k^{\alpha-1}=0$, $N_1\geq1$, and there exists $k\leq N_1$ such that $c_k\neq0$. Then for $\alpha<2 $ it holds that $$\mathcal{H}_\alpha(p)-\mathcal{H}_\alpha(p(\epsilon)) \sim \frac{\varepsilon^\alpha}{\alpha-1} \left(\sum_{k=1}^{N_1} c_k^\alpha\right) \left(\sum_{k=N_1+1}^{N} p_k^\alpha\right)^{-1} , \; \varepsilon\to0.$$
\item[$(iii)$] Let  $\sum_{k=N_1+1}^{N} c_k p_k^{\alpha-1}=0$, $N_1\ge 0$ and for all $k\leq N_1$ we have that $c_k=0$.     Then for $\alpha<2 $ it holds that $$\mathcal{H}_\alpha(p)-\mathcal{H}_\alpha(p(\epsilon)) \sim \frac{\alpha\varepsilon^2}{2} \left(\sum_{k=N_1+1}^{N} c_k^2 p_k^{\alpha-2}\right) \left(\sum_{k=N_1+1}^{N} p_k^\alpha\right)^{-1} , \; \varepsilon\to0.$$
\item[$(iv)$] Let $\sum_{k=N_1+1}^{N} c_k p_k^{\alpha-1}=0, \; \alpha=2.$ Then whatever $N_1\geq 0$ and $c_k$ for $k\leq N_1$ are equal,   we have that $$\mathcal{H}_\alpha(p)-\mathcal{H}_\alpha(p(\epsilon)) \sim \varepsilon^2 \left(\sum_{k=1}^{N} c_k^2\right) \left(\sum_{k=N_1+1}^{N} p_k^2\right)^{-1} , \; \varepsilon\to0.$$
\item[$(v)$] Let $\sum_{k=N_1+1}^{N} c_k p_k^{\alpha-1}=0, \; \alpha>2.$ Then whatever $N_1\geq 0$ and $c_k$ for $k\leq N_1$ are equal,    we have that $$\mathcal{H}_\alpha(p)-\mathcal{H}_\alpha(p(\epsilon)) \sim \frac{\alpha\varepsilon^2}{2} \left(\sum_{k=N_1+1}^{N} c_k^2 p_k^{\alpha-2}\right) \left(\sum_{k=N_1+1}^{N} p_k^\alpha\right)^{-1} , \; \varepsilon\to0.$$
\end{itemize}
\end{theorem}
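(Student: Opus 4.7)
The plan is to mirror the Taylor-expansion strategy used in Theorem \ref{epstozeroless1}, but now carry the expansion of $(p_k+c_k\varepsilon)^\alpha$ one order further, and then compare four competing orders of $\varepsilon$ (namely $\varepsilon$, $\varepsilon^\alpha$, $\varepsilon^2$, and higher-order remainders) to decide which one dominates in each of the five configurations. The central identity is that
\begin{equation*}
\mathcal{H}_\alpha(p)-\mathcal{H}_\alpha(p(\epsilon))=\frac{1}{\alpha-1}\log\!\left(\frac{\sum_{k=1}^{N_1}(c_k\varepsilon)^\alpha+\sum_{k=N_1+1}^{N}(p_k+c_k\varepsilon)^\alpha}{\sum_{k=N_1+1}^{N}p_k^\alpha}\right),
\end{equation*}
and $\log(A+\delta)-\log A\sim \delta/A$ as $\delta\to 0$, so everything reduces to the asymptotics of $\delta:=\sum_k p_k(\epsilon)^\alpha-\sum_k p_k^\alpha$ divided by $A:=\sum_{k=N_1+1}^N p_k^\alpha$.

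First, using the expansion $(1+x)^\alpha = 1+\alpha x + \tfrac{\alpha(\alpha-1)}{2}x^2+o(x^2)$ (valid since $\alpha>1$), I would write, for $k\ge N_1+1$,
\begin{equation*}
(p_k+c_k\varepsilon)^\alpha - p_k^\alpha = \alpha p_k^{\alpha-1}c_k\varepsilon + \tfrac{\alpha(\alpha-1)}{2}p_k^{\alpha-2}c_k^2\varepsilon^2 + o(\varepsilon^2),\quad \varepsilon\to 0,
\end{equation*}
while for $k\le N_1$ we have $(c_k\varepsilon)^\alpha = c_k^\alpha\varepsilon^\alpha$ exactly. Summing,
\begin{equation*}
\delta = \alpha\varepsilon\sum_{k=N_1+1}^{N} c_k p_k^{\alpha-1} + \varepsilon^\alpha\sum_{k=1}^{N_1}c_k^\alpha + \tfrac{\alpha(\alpha-1)}{2}\varepsilon^2\sum_{k=N_1+1}^{N}c_k^2 p_k^{\alpha-2} + o(\varepsilon^2).
\end{equation*}

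Then I would do a simple case analysis, comparing the orders $\varepsilon$, $\varepsilon^\alpha$, and $\varepsilon^2$. Case (i): if $\sum c_k p_k^{\alpha-1}\ne 0$ the $\varepsilon$-term is strictly dominant (since $\alpha>1$ forces $\varepsilon^\alpha=o(\varepsilon)$ and $\varepsilon^2=o(\varepsilon)$), independently of $N_1$, which yields the first claim. Case (ii): if the $\varepsilon$-coefficient vanishes but some $c_k\ne 0$ with $k\le N_1$, then the two surviving terms are $\varepsilon^\alpha$ and $\varepsilon^2$; for $1<\alpha<2$ the $\varepsilon^\alpha$-term wins. Case (iii): if both the $\varepsilon$-coefficient vanishes and all $c_k=0$ for $k\le N_1$, the $\varepsilon^\alpha$-term is simply absent and the $\varepsilon^2$-term is the leading one, still for $\alpha<2$. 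Case (v) is the mirror image of (ii)--(iii) for $\alpha>2$: now $\varepsilon^2=o(\varepsilon^\alpha)$, so regardless of whether any $c_k$ with $k\le N_1$ is nonzero, the $\varepsilon^2$-term dominates (the $\varepsilon^\alpha$ contribution is absorbed into $o(\varepsilon^2)$). Case (iv), $\alpha=2$, is the delicate boundary: the two orders $\varepsilon^\alpha$ and $\varepsilon^2$ coincide and \emph{add up}, giving $\delta\sim \varepsilon^2(\sum_{k=1}^{N_1}c_k^2 + \sum_{k=N_1+1}^{N}c_k^2)=\varepsilon^2\sum_{k=1}^N c_k^2$, which together with $\alpha-1=1$ and $p_k^{\alpha-2}=1$ produces exactly the stated formula.

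The one place where some care is required is case (iv): one must recognize that the $\varepsilon^\alpha$ contribution from the zero-probability coordinates and the second-order Taylor contribution from the positive-probability coordinates sit at the same scale and must be summed, which is why the $\sum_{k=1}^{N}c_k^2$ there extends over the full range of indices rather than only the positive-probability ones. Apart from this merging phenomenon, everything reduces to ordering the three elementary scales $\varepsilon$, $\varepsilon^\alpha$, $\varepsilon^2$ and dividing the resulting leading term of $\delta$ by $A$, as already done in Theorems \ref{epstozero1} and \ref{epstozeroless1}.
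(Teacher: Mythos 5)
Your proposal is correct and follows essentially the same route as the paper: a second-order expansion of $(p_k+c_k\varepsilon)^\alpha$ (the paper's \eqref{equ_5}) followed by a comparison of the three scales $\varepsilon$, $\varepsilon^\alpha$, $\varepsilon^2$ in each of the five configurations, including the observation that at $\alpha=2$ the $\varepsilon^\alpha$ contribution from the zero-probability coordinates and the second-order contribution from the positive-probability coordinates sit at the same scale and must be added. The only cosmetic difference is that you expand the sum inside the logarithm directly and use $\log(1+u)\sim u$, whereas the paper differentiates and applies L'Hospital's rule case by case; the resulting computations are identical.
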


\begin{proof} As in the proof of Theorem \ref{epstozeroless1}, we shall use   expansions \eqref{equ_5} and \eqref{equ_6}. The main tool will be
  L’Hospital’s rule.

$(i)$  Let $\sum_{k=N_1+1}^{N} c_k p_k^{\alpha-1}\neq0.$ Then whatever $N_1\geq 0$ and $\alpha>1$ are equal, we have the following relations:
\begin{align*}
 \lim_{\varepsilon\to0}\frac{\mathcal{H}_{\alpha}(p)-\mathcal{H}_{\alpha}(p(\epsilon))} {\varepsilon}&=
\lim_{\varepsilon\to0}\frac{1}{(\alpha-1)\varepsilon}
\log\left(\sum_{k=1}^{N_1}(c_k\varepsilon)^\alpha+\sum_{k=N_1+1}^{N} (p_k+c_k\varepsilon)^\alpha\right)
\\ &-
\frac{1}{(\alpha-1)\varepsilon}\log\left(\sum_{k=1}^{N}p_k^\alpha\right)=
\frac{1}{\alpha-1}
\\ &\times
\lim_{\varepsilon\to0}\frac{\sum_{k=1}^{N_1}\alpha c_k^\alpha\varepsilon^{\alpha-1}+\sum_{k=N_1+1}^{N} \alpha c_k(p_k+c_k\varepsilon)^{\alpha-1}} {\sum_{k=1}^{N_1}(c_k\varepsilon)^\alpha+\sum_{k=N_1+1}^{N} (p_k+c_k\varepsilon)^\alpha}
\\ &=
\frac{\alpha}{\alpha-1}\left(\sum_{k=N_1+1}^{N} c_k p_k^{\alpha-1}\right) \left(\sum_{k=N_1+1}^{N} p_k^\alpha\right)^{-1}.
\end{align*}
$(ii)$ Let  $\sum_{k=N_1+1}^{N} c_k p_k^{\alpha-1}=0$, $N_1\geq1$, and there exists $k\leq N_1$ such that $c_k\neq0$. Then for $\alpha<2 $ we have that
\begin{align*}
 \lim_{\varepsilon\to0}\frac{\mathcal{H}_{\alpha}(p)-\mathcal{H}_{\alpha}(p(\epsilon))} {\varepsilon^\alpha}&=
\lim_{\varepsilon\to0}\frac{1}{(\alpha-1)\varepsilon^\alpha}
\log\left(\sum_{k=1}^{N_1}(c_k\varepsilon)^\alpha+\sum_{k=N_1+1}^{N} (p_k+c_k\varepsilon)^\alpha\right)
\\ &-
\frac{1}{(\alpha-1)\varepsilon^\alpha}\log\left(\sum_{k=1}^{N}p_k^\alpha\right)=
\frac{1}{\alpha-1}\lim_{\varepsilon\to0}\frac{1}{\alpha\varepsilon^{\alpha-1}}
\\ &\times
\frac{\sum_{k=1}^{N_1}\alpha c_k^\alpha\varepsilon^{\alpha-1}+\sum_{k=N_1+1}^{N} \alpha c_k(p_k+c_k\varepsilon)^{\alpha-1}} {\sum_{k=1}^{N_1}(c_k\varepsilon)^\alpha+\sum_{k=N_1+1}^{N} (p_k+c_k\varepsilon)^\alpha}
\\ &=
\frac{1}{\alpha-1}\lim_{\varepsilon\to0}
\frac{\sum_{k=1}^{N_1} c_k^\alpha\varepsilon^{\alpha-1}+\sum_{k=N_1+1}^{N} ((\alpha-1) c_k^2 p_k^{\alpha-2}\varepsilon + o(\varepsilon))} {\varepsilon^{\alpha-1}\left(\sum_{k=1}^{N_1}(c_k\varepsilon)^\alpha+\sum_{k=N_1+1}^{N} (p_k+c_k\varepsilon)^\alpha\right)}
\\ &=
\frac{1}{\alpha-1} \left(\sum_{k=1}^{N_1} c_k^\alpha\right) \left(\sum_{k=N_1+1}^{N} p_k^\alpha\right)^{-1}.
\end{align*}
$(iii)$  Let  $\sum_{k=N_1+1}^{N} c_k p_k^{\alpha-1}=0$, $N_1\ge 0$ and for all $k\leq N_1$ we have that $c_k=0$.     Then for $\alpha<2 $ it holds that
\begin{align*}
 \lim_{\varepsilon\to0}\frac{\mathcal{H}_{\alpha}(p)-\mathcal{H}_{\alpha}(p(\epsilon))}{\varepsilon^2}&=
\lim_{\varepsilon\to0}\frac{1}{(\alpha-1)\varepsilon^2} \left(\log\left(\sum_{k=N_1+1}^{N} (p_k+c_k\varepsilon)^\alpha\right) -\log\left(\sum_{k=1}^{N}p_k^\alpha\right)\right)
\\ &=
\frac{1}{\alpha-1}\lim_{\varepsilon\to0}\frac{1}{2\varepsilon}\frac{\sum_{k=N_1+1}^{N} \alpha c_k(p_k+c_k\varepsilon)^{\alpha-1}}{\sum_{k=N_1+1}^{N} (p_k+c_k\varepsilon)^\alpha}
\\ &=
\frac{1}{\alpha-1}\lim_{\varepsilon\to0}\frac{\sum_{k=N_1+1}^{N} (\alpha c_k p_k^{\alpha-1} + \alpha(\alpha-1)c_k^2 p_k^{\alpha-2}\varepsilon + o(\varepsilon))}{2\varepsilon\left(\sum_{k=N_1+1}^{N} (p_k+c_k\varepsilon)^\alpha\right)}
\\ &=
\frac{1}{\alpha-1}\lim_{\varepsilon\to0}\frac{\sum_{k=N_1+1}^{N} (\alpha(\alpha-1)c_k^2 p_k^{\alpha-2}\varepsilon + o(\varepsilon))}{2\varepsilon\left(\sum_{k=N_1+1}^{N} (p_k+c_k\varepsilon)^\alpha\right)}
\\ &=
\frac{\alpha}{2} \left(\sum_{k=N_1+1}^{N} c_k^2 p_k^{\alpha-2}\right) \left(\sum_{k=N_1+1}^{N} p_k^\alpha\right)^{-1}.
\end{align*}
$(iv)$ Obviously, in the case $\alpha=2$ we have the simple value of the entropy: $$\mathcal{H}_2(p)=-\log\left(\sum_{k=1}^{N} p_k^2\right).$$
Therefore, if $\sum_{k=N_1+1}^{N} c_k p_k^{\alpha-1}=0, \; \alpha=2,$ then, whatever $N_1\geq 0$ and $c_k$ for $k\leq N_1$ are equal,   we have that
\begin{align*}
 \lim_{\varepsilon\to0}\frac{\mathcal{H}_2(p)-\mathcal{H}_2(p(\epsilon))}{\varepsilon^2}&=
\lim_{\varepsilon\to0}\frac{1}{\varepsilon^2} \log\left(\sum_{k=1}^{N_1} (c_k\varepsilon)^2 + \sum_{k=N_1+1}^{N} (p_k+c_k\varepsilon)^2\right)
\\ &-
\frac{1}{\varepsilon^2}\log\left(\sum_{k=1}^{N}p_k^2\right)=
\lim_{\varepsilon\to0}\frac{1}{2\varepsilon}
\\ &\times
\frac{\sum_{k=1}^{N_1} 2c_k^2\varepsilon + \sum_{k=N_1+1}^{N} 2c_k(p_k+c_k\varepsilon)} {\sum_{k=1}^{N_1} (c_k\varepsilon)^2 + \sum_{k=N_1+1}^{N} (p_k+c_k\varepsilon)^2}
\\ &=
\lim_{\varepsilon\to0} \frac{\sum_{k=1}^{N_1} c_k^2 + \sum_{k=N_1+1}^{N} c_k^2} {\sum_{k=1}^{N_1} (c_k\varepsilon)^2 + \sum_{k=N_1+1}^{N} (p_k+c_k\varepsilon)^2}
\\ &=
\left(\sum_{k=1}^{N} c_k^2\right) \left(\sum_{k=N_1+1}^{N} p_k^2\right)^{-1}.
\end{align*}
$(v)$ Let $\sum_{k=N_1+1}^{N} c_k p_k^{\alpha-1}=0, \; \alpha>2.$ Then whatever $N_1\geq 0$ and $c_k$ for $k\leq N_1$ are equal,    we have that
\begin{align*}
 \lim_{\varepsilon\to0}\frac{\mathcal{H}_{\alpha}(p)-\mathcal{H}_{\alpha}(p(\epsilon))} {\varepsilon^2}&=
\lim_{\varepsilon\to0}\frac{1}{(\alpha-1)\varepsilon^2}
\log\left(\sum_{k=1}^{N_1}(c_k\varepsilon)^\alpha+\sum_{k=N_1+1}^{N} (p_k+c_k\varepsilon)^\alpha\right)
\\ &-
\frac{1}{(\alpha-1)\varepsilon^2}\log\left(\sum_{k=1}^{N}p_k^\alpha\right)=
\frac{1}{\alpha-1}\lim_{\varepsilon\to0}\frac{1}{2\varepsilon}
\\ &\times
\frac{\sum_{k=1}^{N_1}\alpha c_k^\alpha\varepsilon^{\alpha-1}+\sum_{k=N_1+1}^{N} \alpha c_k(p_k+c_k\varepsilon)^{\alpha-1}} {\sum_{k=1}^{N_1}(c_k\varepsilon)^\alpha+\sum_{k=N_1+1}^{N} (p_k+c_k\varepsilon)^\alpha}
\\ &=
\frac{1}{\alpha-1}\lim_{\varepsilon\to0}
\frac{\sum_{k=1}^{N_1} \alpha c_k^\alpha\varepsilon^{\alpha-1}+\sum_{k=N_1+1}^{N} (\alpha(\alpha-1) c_k^2 p_k^{\alpha-2}\varepsilon + o(\varepsilon))} {2\varepsilon\left(\sum_{k=1}^{N_1}(c_k\varepsilon)^\alpha+\sum_{k=N_1+1}^{N} (p_k+c_k\varepsilon)^\alpha\right)}
\\ &=
\frac{\alpha}{2} \left(\sum_{k=N_1+1}^{N} c_k^2 p_k^{\alpha-2}\right) \left(\sum_{k=N_1+1}^{N} p_k^\alpha\right)^{-1}.
\end{align*}
Theorem is proved.
\end{proof}

\clearpage
\subsection{Convergence of the disturbed entropy when the initial distribution is uniform but the number of events increases to $\infty$} The second problem is to establish conditions of stability of the entropy of uniform distribution when the number of events tends to $\infty$.
Let $N>1$, $p_N(uni)=(\frac{1}{N},\ldots,\frac{1}{N})$ be a vector of uniform $N$-dimensional distribution, $\varepsilon=\varepsilon(N)\leq\frac1N$, and
$\{c_{kN}; \; N\geq1, \; 1\leq k\leq N\}$ be a family of fixed numbers (not totally zero) such that  $|c_{kN}|\leq 1$ and $\sum_{k=1}^{N}c_{kN}=0$. Note that for any $N\ge 1$ there are   strictly positive numbers $c_{kN}$ for some $k$ and consider the disturbed distribution vector $p_N^{'}=(\frac1N+c_{1N}\varepsilon,...,\frac1N+c_{NN}\varepsilon)$.

\begin{theorem}\label{Ntoinf}
Let $\varepsilon(N)=o(\frac1N),\;  N \to \infty$. Then $$\mathcal{H}_{\alpha}(p_N)-\mathcal{H}_{\alpha}(p_N^{'}) \to 0, \; N \to \infty.$$
\end{theorem}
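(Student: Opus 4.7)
The plan is to make the linearization $p_k(\varepsilon)=\tfrac{1}{N}(1+Nc_{kN}\varepsilon)$ the organizing substitution, so that the hypothesis $\varepsilon(N)=o(1/N)$ becomes the single statement $\delta_N:=N\varepsilon(N)\to 0$, and the constraint $\sum_k c_{kN}=0$ together with $|c_{kN}|\le 1$ is used to kill the first-order term in a uniform Taylor expansion. Since $\mathcal{H}_\alpha(p_N(\text{uni}))=\log N$, for $\alpha\neq 1$ one rewrites
\begin{equation*}
\mathcal{H}_\alpha(p_N)-\mathcal{H}_\alpha(p_N')
=\frac{1}{1-\alpha}\left(\log N^{1-\alpha}-\log\sum_{k=1}^{N}\bigl(\tfrac{1}{N}+c_{kN}\varepsilon\bigr)^{\alpha}\right)
=\frac{-1}{1-\alpha}\log\!\left(\frac{1}{N}\sum_{k=1}^{N}(1+Nc_{kN}\varepsilon)^{\alpha}\right),
\end{equation*}
so it suffices to show $A_N:=\frac{1}{N}\sum_{k=1}^{N}(1+Nc_{kN}\varepsilon)^{\alpha}\to 1$.

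The core step is a uniform second-order Taylor estimate. Since $|Nc_{kN}\varepsilon|\le\delta_N\to 0$, for all $N$ large enough we have $|Nc_{kN}\varepsilon|\le 1/2$ uniformly in $k$, and the classical bound for $(1+x)^{\alpha}$ on $[-1/2,1/2]$ gives a constant $C_\alpha$ with
\begin{equation*}
(1+Nc_{kN}\varepsilon)^{\alpha}=1+\alpha Nc_{kN}\varepsilon+\eta_{kN},\qquad |\eta_{kN}|\le C_\alpha (Nc_{kN}\varepsilon)^2\le C_\alpha\delta_N^2.
\end{equation*}
Summing over $k$ and using $\sum_k c_{kN}=0$ makes the linear term vanish identically, so
\begin{equation*}
A_N=1+\frac{1}{N}\sum_{k=1}^{N}\eta_{kN},\qquad |A_N-1|\le C_\alpha\delta_N^2\to 0.
\end{equation*}
Hence $\log A_N\to 0$, and multiplication by the bounded constant $1/(\alpha-1)$ finishes the case $\alpha\neq 1$.

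For $\alpha=1$ (Shannon), I would handle the case directly rather than invoke continuity in $\alpha$: expand
\begin{equation*}
\mathcal{H}_1(p_N)-\mathcal{H}_1(p_N')=\frac{1}{N}\sum_{k=1}^{N}(1+Nc_{kN}\varepsilon)\log(1+Nc_{kN}\varepsilon),
\end{equation*}
using $\sum(\tfrac{1}{N}+c_{kN}\varepsilon)=1$ to cancel the $\log(1/N)$ contribution. On $|x|\le 1/2$ one has $(1+x)\log(1+x)=x+\tfrac{x^2}{2}+O(x^3)$ uniformly; the linear term in $x_k=Nc_{kN}\varepsilon$ again cancels thanks to $\sum c_{kN}=0$, and what remains is bounded by $C\delta_N^2\to 0$.

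No step looks truly hard; the only potential pitfall is making sure the Taylor estimates are genuinely uniform in $k$ and $N$, and this is exactly what $|c_{kN}|\le 1$ combined with $\delta_N\to 0$ gives. The take-away is that the hypothesis $\varepsilon=o(1/N)$ is sharp for this cancellation argument: the first-order corrections vanish by the zero-sum condition, and the quadratic remainder is controlled precisely by $(N\varepsilon)^2$.
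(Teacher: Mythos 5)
Your proposal is correct, but it follows a genuinely different route from the paper's. The paper argues by a squeeze: it first uses the maximality of R\'enyi entropy on the uniform distribution (Lemma~\ref{entropymax}) to get $\mathcal{H}_{\alpha}(p_N)-\mathcal{H}_{\alpha}(p_N')\ge 0$, and then bounds the difference from above by replacing every factor $1+Nc_{kN}\varepsilon$ with its supremum (for $\alpha\ge 1$) or infimum (for $0<\alpha<1$), relying only on the uniform convergence $\max_k|Nc_{kN}\varepsilon|\to 0$ together with monotonicity of $x\mapsto x^{\alpha}$ and $x\mapsto x\log x$; the zero-sum condition $\sum_k c_{kN}=0$ enters there only to make $p_N'$ a probability vector and to justify the sign of the difference. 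You instead prove directly that $A_N=\frac{1}{N}\sum_{k}(1+Nc_{kN}\varepsilon)^{\alpha}\to 1$ via a uniform second-order Taylor expansion in which the zero-sum condition cancels the first-order term exactly, so no appeal to the maximality lemma and no case split on the sign of $1-\alpha$ is needed (beyond treating $\alpha=1$ separately, which the paper also does). Your reduction of the difference to $\frac{1}{\alpha-1}\log A_N$ is algebraically correct, the Lagrange remainder bound $|\eta_{kN}|\le C_{\alpha}(Nc_{kN}\varepsilon)^2$ is uniform in $k$ for large $N$ since $|Nc_{kN}\varepsilon|\le N\varepsilon\to 0$, and the $\alpha=1$ expansion of $(1+x)\log(1+x)$ is handled the same way. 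What your approach buys is a quantitative rate, $|\mathcal{H}_{\alpha}(p_N)-\mathcal{H}_{\alpha}(p_N')|=O((N\varepsilon)^2)$, whereas the paper's sup/inf bound does not exploit the cancellation and only yields $O(N\varepsilon)$; what the paper's approach buys is that it needs no differentiability or Taylor control at all, only monotonicity and the known extremal property of the uniform distribution. Both proofs are valid.
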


\begin{proof}
We know that $N\varepsilon \to 0, \; N \to \infty$ and the family of numbers $\{c_{kn}; \; n \geq 1, \; 1\leq k\leq n\}$ is bounded. Therefore the values $$\sup\limits_{n \geq 1, \; 1\leq k\leq n} (1+Nc_{kn}\varepsilon)\to1, \; \inf\limits_{n \geq 1, \; 1\leq k\leq n} (1+Nc_{kn}\varepsilon)\to1, \; N \to \infty$$ as the function of $N$, and for every $N\ge 1 \; \sup\limits_{n \geq 1, \; 1\leq k\leq n} (1+Nc_{kn}\varepsilon)\geq1.$
Recall  that function $x\log x$ is increasing in $x\geq 1$ and $x\log x\leq0$ for $0<x<1$. Moreover, Renyi entropy is maximal on the uniform distribution. As a consequence of all these observations and assumptions  we get that
\begin{align*}
0\leq \mathcal{H}_1(p_N)-\mathcal{H}_1(p_N^{'}) &= \frac1N\sum_{k=1}^{N} (1 + Nc_{kN}\varepsilon)\log(1 + Nc_{kN}\varepsilon)
\\ &\leq
\frac1N\sum_{k=1}^{N}\sup\limits_{n \geq 1, \; 1\leq k\leq n} (1+Nc_{kn}\varepsilon)\log\sup\limits_{n \geq 1, \; 1\leq k\leq n} (1+Nc_{kn}\varepsilon)
\\ &=
\sup\limits_{n \geq 1, \; 1\leq k\leq n} (1+Nc_{kn}\varepsilon)\log\sup\limits_{n \geq 1, \; 1\leq k\leq n} (1+Nc_{kn}\varepsilon)
\\ &\to
0, \; N \to \infty.
\end{align*}
Let $\alpha>1$. Then
\begin{align*}
0\leq \mathcal{H}_\alpha(p_N) - \mathcal{H}_\alpha(p_N^{'}) &= \frac{1}{\alpha-1}\log\left(\frac1N\sum_{k=1}^{N} (1+Nc_{kN}\varepsilon)^{\alpha}\right)
\\ &\leq
\frac{1}{\alpha-1}\log\left(\frac1N\sum_{k=1}^{N} \left(\sup\limits_{n \geq 1, \; 1\leq k\leq n} (1+Nc_{kn}\varepsilon)\right)^{\alpha}\right)
\\ &=
\frac{\alpha}{\alpha-1}\log\left(\sup\limits_{n \geq 1, \; 1\leq k\leq n} (1+Nc_{kn}\varepsilon)\right) \to0, \; N \to \infty.
\end{align*}
Similarly, for $0<\alpha<1$ we produce the transformations:
\begin{align*}
0\leq \mathcal{H}_\alpha(p_N) - \mathcal{H}_\alpha(p_N^{'}) &= \frac{1}{\alpha-1}\log\left(\frac1N\sum_{k=1}^{N} (1+Nc_{kN}\varepsilon)^{\alpha}\right)
\\ &\leq
\frac{1}{\alpha-1}\log\left(\frac1N\sum_{k=1}^{N} \left(\inf\limits_{n \geq 1, \; 1\leq k\leq n} (1+Nc_{kn}\varepsilon)\right)^{\alpha}\right)
\\ &=
\frac{\alpha}{\alpha-1}\log\left(\inf\limits_{n \geq 1, \; 1\leq k\leq n} (1+Nc_{kn}\varepsilon)\right) \to0, \; N \to \infty,
\end{align*}
and the proof follows.
\end{proof}

\subsection{Binomial and Poisson distribution}
In this section we look at convergence of R{\'e}nyi entropy of binomial distribution to R{\'e}nyi entropy of Poisson distribution.
\begin{theorem}\label{BintoPoi}
Let $\lambda>0$ be fixed. For any $\alpha>0$
$$\lim_{n\to\infty} \mathcal{H}_{\alpha}\left(B\left(n,\frac{\lambda}{n}\right)\right)= \mathcal{H}_{\alpha}(Poi(\lambda)).$$
\end{theorem}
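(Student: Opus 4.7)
The plan is to apply Lebesgue's dominated convergence theorem with respect to counting measure on $\mathbb{N}_0$. Writing $p^{(n)}_k=\binom{n}{k}(\lambda/n)^k(1-\lambda/n)^{n-k}$ (extended by $0$ for $k>n$) and $q_k=e^{-\lambda}\lambda^k/k!$, the goal for $\alpha\neq 1$ is to show $\sum_{k\ge 0}(p^{(n)}_k)^\alpha\to \sum_{k\ge 0}q_k^\alpha$; continuity of $\log$ and multiplication by $1/(1-\alpha)$ then yield the entropy convergence. For $\alpha=1$ the corresponding goal is $-\sum_k p^{(n)}_k\log p^{(n)}_k\to -\sum_k q_k\log q_k$.

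First I would verify pointwise convergence: for each fixed $k$, the elementary identity
$$p^{(n)}_k=\frac{\lambda^k}{k!}\cdot\frac{n(n-1)\cdots(n-k+1)}{n^k}\cdot\left(1-\frac{\lambda}{n}\right)^{n-k}$$
yields $p^{(n)}_k\to e^{-\lambda}\lambda^k/k!=q_k$ as $n\to\infty$. Next I would produce the dominating sequence. From $n(n-1)\cdots(n-k+1)\le n^k$ and $(1-\lambda/n)^{n-k}\le 1$ (for $n>\lambda$ and $k\le n$) one gets the uniform bound
$$p^{(n)}_k\le \frac{\lambda^k}{k!}\qquad\text{for all }k\ge 0,\ n>\lambda.$$

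For the Rényi case $\alpha\neq 1$, since $x\mapsto x^\alpha$ is monotone on $[0,1]$, this gives $(p^{(n)}_k)^\alpha\le(\lambda^k/k!)^\alpha$. Summability of $\{(\lambda^k/k!)^\alpha\}$ is immediate from Stirling: once $k\ge 2e\lambda$ we have $\lambda^k/k!\le 2^{-k}$, hence $(\lambda^k/k!)^\alpha\le 2^{-k\alpha}$, which is summable. Dominated convergence then delivers $\sum_k(p^{(n)}_k)^\alpha\to\sum_k q_k^\alpha$, and the result follows.

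For the Shannon case $\alpha=1$ the function $\phi(x)=-x\log x$ is not monotone on the whole interval $[0,1]$, which is the one real nuisance. To handle it I would split the sum at a threshold $k_0$ chosen so that $\lambda^k/k!\le 1/e$ for $k\ge k_0$; on that tail $\phi$ is increasing on $[0,1/e]$, so $0\le \phi(p^{(n)}_k)\le \phi(\lambda^k/k!)=(\lambda^k/k!)(\log k!-k\log\lambda)$, which is summable because $k!$ outgrows any polynomial in $k$ and $\lambda^k/k!$ decays super-exponentially. On the finite head $k<k_0$ the terms converge individually by continuity of $\phi$ and the pointwise limit $p^{(n)}_k\to q_k$. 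Combining the two pieces via dominated convergence completes the case $\alpha=1$. The main obstacle is precisely this Shannon-case domination; the Rényi case is essentially immediate once the factorial-based bound is observed.
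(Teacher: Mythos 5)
Your proposal is correct, and for $\alpha\neq 1$ it coincides with the paper's argument: both use the pointwise limit $\binom{n}{k}(\lambda/n)^k(1-\lambda/n)^{n-k}\to e^{-\lambda}\lambda^k/k!$, the uniform bound $\binom{n}{k}(\lambda/n)^k(1-\lambda/n)^{n-k}\le \lambda^k/k!$, and dominated convergence for the sum $\sum_k (p_k^{(n)})^\alpha$. Where you genuinely diverge is the Shannon case $\alpha=1$. The paper does not dominate $-x\log x$ directly; instead it algebraically decomposes $\mathcal{H}_1(B(n,p))$ into the pieces $np\log(1-p)$, $-n\log(1-p)$, $-np\log(np)$ and a combinatorial sum $\sum_k \binom{n}{k}p^k(1-p)^{n-k}\bigl(\log\frac{n^k(n-k)!}{n!}+\log k!\bigr)$, matches these term-by-term against the decomposition $\lambda-\lambda\log\lambda+\sum_k e^{-\lambda}\frac{\lambda^k}{k!}\log k!$ of the Poisson entropy, and applies dominated convergence only to the combinatorial sum, using $\frac{\log x}{x}\le 1$ to get the dominating sequence $\frac{\lambda^k}{k!}(1+\log k!)$. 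Your route avoids that bookkeeping entirely by exploiting the monotonicity of $\phi(x)=-x\log x$ on $[0,1/e]$: you split off a finite head (handled termwise by continuity) and dominate the tail by $\phi(\lambda^k/k!)$, which is summable. This is cleaner and arguably more robust --- the only price is the explicit threshold $k_0$ with $\lambda^k/k!\le 1/e$ for $k\ge k_0$, which is needed because domination of the argument does not by itself give domination of $\phi$ of the argument; you correctly identified and handled exactly this point, so there is no gap.
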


\begin{proof}
First, let $\alpha=1$. We will find and regroup entropy of binomial and Poisson distribution.
\begin{align*}
\mathcal{H}_1\left(B\left(n,p\right)\right)&= -\sum_{k=0}^{n}{n \choose k}p^k(1-p)^{n-k} \log\left({n \choose k} p^k(1-p)^{n-k}\right)
\\ &=
-\sum_{k=0}^{n}{n \choose k}p^k(1-p)^{n-k}\log{n \choose k} -
n\left(p\log p+(1-p)\log(1-p)\right)
\\ &=
-\sum_{k=0}^{n}{n \choose k}p^k(1-p)^{n-k}(\log n! -\log k! - \log (n-k)!)+np\log n
\\ &-
np\log np - n\log(1-p)+np\log(1-p) = np\log(1-p)-n\log(1-p)
\\ &-
np\log np+ np\log n-\sum_{k=0}^{n}{n \choose k}p^k(1-p)^{n-k} (\log n! -\log k! - \log (n-k)!).
\end{align*}
\begin{align*}
\mathcal{H}_1(Poi(\lambda))=-\sum_{k=0}^{\infty}e^{-\lambda}\frac{\lambda^k}{k!}\log \left(e^{-\lambda}\frac{\lambda^k}{k!}\right)=\lambda-\lambda\log\lambda + \sum_{k=0}^{\infty}e^{-\lambda}\frac{\lambda^k}{k!}\log k!
\end{align*}
We want to show componentwise convergence of entropies. For that let's take $np=\lambda$ and observe that:
\begin{align*}
np\log(1-p)=\lambda\log(1-p)\to\lambda\log1=0, \; n \to \infty, \; p \to 0.
\end{align*}
\begin{align*}
-n\log(1-p)=\log\left(1-\frac{\lambda}{n}\right)^{-n}\to\lambda, \; n \to \infty, \; p \to 0.
\end{align*}
\begin{align*}
-np\log np=-\lambda\log\lambda.
\end{align*}
\begin{align*}
np\log n&-\sum_{k=0}^{n}{n \choose k}p^k(1-p)^{n-k}(\log n! -\log k! - \log (n-k)!)
\\ &=
\sum_{k=0}^{n}{n \choose k}p^k(1-p)^{n-k}k\log n-\sum_{k=0}^{n}{n \choose k}p^k(1-p)^{n-k}(\log n! -\log k! - \log (n-k)!)
\\ &=
\sum_{k=0}^{n}{n \choose k}p^k(1-p)^{n-k}\left(\log n^k - \log n! + \log k! + \log (n-k)!\right)
\\ &=
\sum_{k=0}^{n}{n \choose k}p^k(1-p)^{n-k}\left(\log \frac{n^k (n-k)!}{n!} + \log k! \right)
\end{align*}
It is well-known that $\frac{\log(x)}{x}\leq 1, \; x>0$. Using this fact, we  get the following representation
\begin{align*}
{n \choose k}p^k(1-p)^{n-k}\log \frac{n^k (n-k)!}{n!} &= \frac{n!}{(n-k)!k!} \left(\frac{\lambda}{n}\right)^k\left(1-\frac{\lambda}{n}\right)^{n-k}\log \frac{n^k (n-k)!}{n!}
\\ &=
\frac{\lambda^k}{k!}\left(1-\frac{\lambda}{n}\right)^{n-k}
\frac{n!}{n^k(n-k)!} \log \frac{n^k (n-k)!}{n!}
\\ &\leq
\frac{\lambda^k}{k!}\left(1-\frac{\lambda}{n}\right)^{n-k}
\leq \frac{\lambda^k}{k!}.
\end{align*}
For the second part of sum simply observe that:
\begin{align*}
{n \choose k}p^k(1-p)^{n-k} \log k! &= \frac{n!}{(n-k)!k!} \left(\frac{\lambda}{n}\right)^k\left(1-\frac{\lambda}{n}\right)^{n-k} \log k!
\\ &=
\frac{\lambda^k}{k!}\log k! \left(1-\frac{\lambda}{n}\right)^{n-k} \frac{n!}{(n-k)!n^k}
\\ &\leq
\frac{\lambda^k}{k!}\log k!
\end{align*}
$\sum_{k=0}^{\infty}\frac{\lambda^k}{k!}\left(1+\log k!\right)<\infty$, thus, by Lebesgue's dominated convergence theorem:
\begin{align*}
\lim_{n\to\infty}&\sum_{k=0}^{n}{n \choose k}p^k(1-p)^{n-k}\left(\log\frac{n^k(n-k)!}{n!}+\log k!\right)
\\ &=
\sum_{k=0}^{\infty}\lim_{n\to\infty}{n \choose k}p^k(1-p)^{n-k}\left(\log\frac{n^k(n-k)!}{n!}+\log k!\right)
\\ &=
\sum_{k=0}^{\infty}\lim_{n\to\infty}\frac{\lambda^k}{k!}\left(1-\frac{\lambda}{n}\right)^{n-k} \frac{n!}{(n-k)!n^k}\left(\log \frac{n^k (n-k)!}{n!}+ \log k!\right)
\\ &=
\sum_{k=0}^{\infty}e^{-\lambda}\frac{\lambda^k}{k!}\log k!
\end{align*}
Finally, we get that
$$\lim_{n\to\infty} \mathcal{H}_1\left(B\left(n,\frac{\lambda}{n}\right)\right)= \mathcal{H}_1(Poi(\lambda)).$$
For $\alpha\neq1$ we have:
$$\mathcal{H}_{\alpha}(binomial)=\frac{1}{1-\alpha}\log\sum_{k=0}^{n}\left({n \choose k}p^k(1-p)^{n-k}\right)^\alpha,$$
$$\mathcal{H}_{\alpha}(poisson)=\frac{1}{1-\alpha}\log\sum_{k=0}^{+\infty}\left(e^{-\lambda}\frac{\lambda^k}{k!}\right)^\alpha.$$
Thus, to show that
$$\lim_{n\to\infty} \mathcal{H}_{\alpha}\left(B\left(n,\frac{\lambda}{n}\right)\right)= \mathcal{H}_{\alpha}(Poi(\lambda)),$$
it is enough to show convergence of sums which follows from Lebesgue's dominated convergence theorem and
$$\left({n \choose k}p^k(1-p)^{n-k}\right)^\alpha\leq\left(\frac{\lambda^k}{k!}\right)^\alpha, \; \sum_{k=0}^{+\infty} \left(\frac{\lambda^k}{k!}\right)^\alpha < +\infty.$$
\end{proof}

\section{Appendix}\label{sec_4}
We  let $0\log0=0$ by continuity  and  prove several auxiliary results. Stating these three lemmas, we assume that $p_i\geq0, 1\le i\le N$ are fixed.
\begin{lemma}\label{entropyalpha1}
$H_\alpha(p) \to H(p), \; \alpha \to 1.$
\end{lemma}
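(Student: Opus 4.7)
The plan is to recognize $\mathcal{H}_\alpha(p)$ as a Newton difference quotient and pass to the limit via L'H\^opital's rule (equivalently, the definition of the derivative). Recall from Section~\ref{sec_2} the function $f(\alpha)=\log\bigl(\sum_{k=1}^{N}p_k^{\alpha}\bigr)$. Since $\sum_{k=1}^{N}p_k=1$, we have $f(1)=\log 1=0$, and hence
\[
\mathcal{H}_\alpha(p)=\frac{f(\alpha)}{1-\alpha}=-\frac{f(\alpha)-f(1)}{\alpha-1}.
\]
Provided $f$ is differentiable at $\alpha=1$, the right-hand side tends to $-f'(1)$ as $\alpha\to 1$, so the result will follow once I identify $-f'(1)$ with $\mathcal{H}(p)$.

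First I would handle possible zero probabilities. Since $p_k^\alpha=0$ whenever $p_k=0$ and $\alpha>0$, restricting the sum in $f$ to the index set $\{k:p_k>0\}$ does not alter $f(\alpha)$, and by the convention $0\log 0=0$ it does not alter $\mathcal{H}(p)$ either. On this reduced set $\sum_k p_k^\alpha>0$ for every $\alpha>0$ (at least one $p_k$ is positive), so $f$ is the composition of $\log$ with a smooth, strictly positive function of $\alpha$, hence $f\in C^\infty(\mathbb{R}^+)$; in particular $f'(1)$ exists.

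Next I would invoke the formula for $f'$ already derived in Section~\ref{sec_2}, namely $f'(\alpha)=\frac{\sum_k p_k^\alpha\log p_k}{\sum_k p_k^\alpha}$, and specialize to $\alpha=1$. The denominator equals $\sum_{k:p_k>0}p_k=1$ and the numerator equals $\sum_{k:p_k>0}p_k\log p_k=\sum_{k=1}^{N}p_k\log p_k$ (once more using $0\log 0=0$). Consequently
\[
\lim_{\alpha\to 1}\mathcal{H}_\alpha(p)=-f'(1)=-\sum_{k=1}^{N}p_k\log p_k=\mathcal{H}(p).
\]

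There is no real obstacle; the only delicate point is bookkeeping around the convention $0\log 0=0$ and justifying differentiability of $f$ at $\alpha=1$, both of which are routine. Alternatively one could apply L'H\^opital's rule directly to the $0/0$ form $f(\alpha)/(1-\alpha)$ without introducing the increment $f(\alpha)-f(1)$, but the difference-quotient viewpoint is conceptually cleaner and fits the framework already set up in Section~\ref{sec_2}.
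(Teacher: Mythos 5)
Your proposal is correct and takes essentially the same route as the paper, which applies L'H\^opital's rule to the $0/0$ form and evaluates $f'(1)=\sum_k p_k\log p_k$; your difference-quotient formulation is just the derivative-at-$1$ reading of the same computation. Your extra care with the convention $0\log 0=0$ and with restricting to the positive $p_k$ is a small but welcome refinement that the paper leaves implicit.
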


\begin{proof}
Using L'Hospital's rule, we get the following relations:
\begin{align*}
\lim_{\alpha\to 1} H_\alpha(p) &= \lim_{\alpha\to 1} \frac{1}{1-\alpha}\log\left( \sum_{k=1}^{N} p_k^{\alpha}\right) = \lim_{\alpha\to 1} \frac{1}{-1} \frac{\sum_{k=1}^{N} p_k^{\alpha}\log(p_k)}{\sum_{k=1}^{N} p_k^{\alpha}}
\\ &=
-\sum_{k=1}^{N} p_k\log(p_k)= H(p).
\end{align*}
\end{proof}

\noindent Let $\mathcal{H}_1(p) := \mathcal{H}(p)$ (Shannon entropy), and  so   $H_\alpha(p)$ is defined for all $\alpha>0$ and is continuous in $\alpha$.

\begin{lemma}\label{entropydecr}
$H_\alpha(p)$ is non-increasing in $\alpha>0$.
\end{lemma}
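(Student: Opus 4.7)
My plan is to show that $\mathcal{H}_\alpha(p)$ has non-positive derivative on $(0,1)\cup(1,\infty)$ and then extend by continuity at $\alpha=1$.

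First, I would work on the open intervals where $\alpha\neq 1$ and differentiate directly. Using the notation from the paper, $\mathcal{H}_\alpha(p)=\frac{f(\alpha)}{1-\alpha}$ (since $f(1)=0$), so
\begin{align*}
\mathcal{H}'_\alpha(p)=\frac{f(\alpha)}{(1-\alpha)^2}+\frac{f'(\alpha)}{1-\alpha}
=\frac{1}{(1-\alpha)^2}\left[f(\alpha)-(\alpha-1)\frac{S_1(\alpha)}{S_0(\alpha)}\right].
\end{align*}
The prefactor $(1-\alpha)^{-2}$ is positive, so I only need to establish the sign of the bracketed expression.

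The key step is to rewrite the bracket in terms of the escort distribution $q_k(\alpha)=p_k^\alpha/S_0(\alpha)$ already introduced in the paper. From $\log q_k(\alpha)=\alpha\log p_k-f(\alpha)$ I get $f(\alpha)=\alpha\sum_k q_k\log p_k-\sum_k q_k\log q_k$, and combining this with $(\alpha-1)S_1(\alpha)/S_0(\alpha)=(\alpha-1)\sum_k q_k\log p_k$ one obtains
\begin{align*}
\mathcal{H}'_\alpha(p)=-\frac{1}{(1-\alpha)^2}\sum_{k=1}^N q_k(\alpha)\log\frac{q_k(\alpha)}{p_k}=-\frac{D(q(\alpha)\|p)}{(1-\alpha)^2},
\end{align*}
where $D(\cdot\|\cdot)$ denotes the Kullback--Leibler divergence. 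Since $D(q(\alpha)\|p)\ge 0$ by the classical Gibbs inequality (equality iff $q(\alpha)=p$, i.e.\ only at $\alpha=1$ or on the uniform distribution), we conclude $\mathcal{H}'_\alpha(p)\le 0$ on $(0,1)$ and on $(1,\infty)$.

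Finally, to combine the two intervals into a single monotonicity statement on $(0,\infty)$, I would invoke continuity of $\mathcal{H}_\alpha(p)$ at $\alpha=1$ (Lemma \ref{entropyalpha1}): if $0<\alpha_1<\alpha_2$ straddle the point $1$, then $\mathcal{H}_{\alpha_1}(p)\ge \lim_{\alpha\to 1^-}\mathcal{H}_\alpha(p)=\mathcal{H}_1(p)=\lim_{\alpha\to 1^+}\mathcal{H}_\alpha(p)\ge\mathcal{H}_{\alpha_2}(p)$. The only genuinely substantive step is the algebraic rearrangement that reveals the KL divergence; everything else is bookkeeping. A possible pitfall is the case where some $p_k=0$, but those terms drop out of every sum and the escort distribution is naturally supported on $\{k:p_k>0\}$, so the argument goes through verbatim with $D$ interpreted on that support.
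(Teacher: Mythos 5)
Your proposal is correct and follows essentially the same route as the paper: both differentiate $\mathcal{H}_\alpha(p)$ for $\alpha\neq 1$, rewrite the derivative as $-\frac{1}{(1-\alpha)^2}\sum_k q_k(\alpha)\log\frac{q_k(\alpha)}{p_k}$, conclude non-positivity from the Kullback--Leibler divergence, and bridge the point $\alpha=1$ by continuity. The only difference is cosmetic: you reach the KL form via the identity $\log q_k(\alpha)=\alpha\log p_k-f(\alpha)$, whereas the paper regroups the logarithms directly inside the sum.
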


\begin{proof} Indeed,
\begin{align*}
\frac{\partial H_\alpha(p)}{\partial \alpha} &= \frac{1}{(1-\alpha)^2} \log\left(\sum_{i=1}^{N} p_i^{\alpha}\right) + \frac{1}{1-\alpha} \frac {\sum_{k=1}^{N} p_k^{\alpha}\log p_k} {\sum_{k=1}^{N} p_k^{\alpha}}
\\ &=\frac{1}{(1-\alpha)^2 \sum_{k=1}^{N} p_k^{\alpha}}
\sum_{k=1}^{N} p_k^{\alpha}\left(\log\left(\sum_{i=1}^{N} p_i^{\alpha}\right) + \log p_k^{1-\alpha}\right)
\\ &=
\frac{-1}{(1-\alpha)^2}
\sum_{k=1}^{N} \frac{p_k^{\alpha}}{\sum_{i=1}^{N} p_i^{\alpha}} \log\left(\frac {p_k^\alpha} {\sum_{i=1}^{N} p_i^{\alpha}}\frac{1}{p_k}\right).
\end{align*}
Let $q_k=\frac{p_k^{\alpha}}{\sum_{i=1}^{N} p_i^{\alpha}}$. Then
\begin{align*}
\frac{\partial H_\alpha(p)}{\partial \alpha} = \frac{-1}{(1-\alpha)^2}
\sum_{k=1}^{N} q_k \log\frac{q_k}{p_k} \leq 0.
\end{align*}
The fact that $\mathcal{H}_\alpha(p) \leq \mathcal{H}_1(p) \leq \mathcal{H}_\beta(p)$, where $0<\beta<1<\alpha$ follows from Lemma \ref{entropyalpha1}.
\end{proof}

\begin{lemma}\label{entropymax}
$\mathcal{H}_\alpha(p) \leq \log N$ and it reaches maximum when distribution is uniform.
\end{lemma}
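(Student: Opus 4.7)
The inequality $\mathcal{H}_\alpha(p)\le\log N$ is most cleanly obtained by an $\alpha$-dependent application of Jensen's inequality to the uniform average $\tfrac1N\sum_{k=1}^N p_k^\alpha$. The plan is to separate the three cases $0<\alpha<1$, $\alpha=1$, $\alpha>1$ and, in each, track the sign of the prefactor $1/(1-\alpha)$ carefully so that the resulting inequality points the same way.

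For $\alpha=1$, I would apply Jensen's inequality to the concave function $\log$: writing $\mathcal{H}(p)=\sum_{p_k>0} p_k\log(1/p_k)$ and using the convention $0\log 0=0$ gives
$$\mathcal{H}(p)\le\log\sum_{p_k>0}p_k\cdot\frac{1}{p_k}=\log m\le\log N,$$
where $m$ denotes the number of nonzero probabilities. For $\alpha>1$, the function $x\mapsto x^\alpha$ is convex, so Jensen on the uniform average yields $\tfrac1N\sum p_k^\alpha\ge(\tfrac1N\sum p_k)^\alpha=N^{-\alpha}$, whence $\sum p_k^\alpha\ge N^{1-\alpha}$; taking $\log$ and dividing by the negative number $1-\alpha$ flips the inequality to $\mathcal{H}_\alpha(p)\le\log N$. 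For $0<\alpha<1$, the function $x\mapsto x^\alpha$ is concave, the Jensen inequality reverses to $\sum p_k^\alpha\le N^{1-\alpha}$, and dividing by the positive $1-\alpha$ again produces $\mathcal{H}_\alpha(p)\le\log N$.

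For the claim that the maximum is attained at the uniform distribution, I would first verify by direct substitution that $\mathcal{H}_\alpha(p(\mathrm{uniform}))=\tfrac{1}{1-\alpha}\log(N\cdot N^{-\alpha})=\log N$ for $\alpha\ne1$, and the analogous calculation $-\sum (1/N)\log(1/N)=\log N$ at $\alpha=1$. Combined with the bound above, this shows that uniform realizes the maximum. Strict inequality off the uniform distribution follows from the strict convexity/concavity of $x\mapsto x^\alpha$ (for $\alpha\ne1$) and the strict concavity of $\log$ (for $\alpha=1$): equality in Jensen forces $p_1=\dots=p_N$, which together with $\sum p_k=1$ yields $p_k=1/N$.

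There is no substantial obstacle here; the only thing to watch is the bookkeeping of the sign of $1-\alpha$ when it multiplies the logarithmic inequality, so that the three cases consistently collapse into the single upper bound $\log N$.
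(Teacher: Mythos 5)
Your proof is correct, but it takes a genuinely different route from the paper's. The paper derives the bound $\mathcal{H}_\alpha(p)\le\log N$ in two lines by combining the limit $\lim_{\alpha\to 0+}\mathcal{H}_\alpha(p)=\log m\le\log N$ with the monotonicity of $\alpha\mapsto\mathcal{H}_\alpha(p)$ already established in Lemma~\ref{entropydecr}; the attainment at the uniform distribution is then checked by the same direct substitution you use. You instead give a self-contained, $\alpha$-by-$\alpha$ Jensen argument: concavity of $\log$ at $\alpha=1$, convexity of $x\mapsto x^\alpha$ for $\alpha>1$, concavity for $0<\alpha<1$, with the sign of $1/(1-\alpha)$ tracked so the inequality always points the same way. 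Your version does not rely on the preceding monotonicity lemma and yields, as a bonus, the equality conditions (with the minor caveat that in the $\alpha=1$ case equality in Jensen only forces the nonzero probabilities to be equal, so you also need $m=N$ to conclude full uniformity); the paper's version is shorter given the infrastructure it has already built. Both are valid proofs of the stated lemma.
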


\begin{proof}
Let $1 \leq m \leq N$ be the number of non-zero probabilities. Then we have:
\begin{align*}
\lim_{\alpha\to 0+} \mathcal{H}_\alpha(p)= \lim_{\alpha\to 0+} \frac{1}{1-\alpha}\log\left( \sum_{k=1}^{N} p_k^{\alpha}\right) = \log m \leq \log N.
\end{align*}
So $\mathcal{H}_\alpha(p) \leq \log N$ due to Lemma \ref{entropydecr}. For the second part, put $p_1=\ldots=p_N=\frac1N$.
\begin{align*}
\mathcal{H}_1(p)=-\sum_{k=1}^{N} \frac1N\log\left(\frac1N\right)= \log N.
\end{align*}
\begin{align*}
\mathcal{H}_\alpha(p) = \frac{1}{1-\alpha}\log\left( \sum_{k=1}^{N} \frac{1}{N^{\alpha}}\right) =\frac{1}{1-\alpha}\log\left( \frac{N}{N^{\alpha}}\right) = \log N.
\end{align*}
\end{proof}

\begin{remark}\label{rem:0andinf}
Let $1\leq m \leq N$ be the number of non-zero probabilities and without loss of generality let $p_k < p_1=...=p_{N_1}$ for every $N_1+1\leq k\leq N$. Then we can also define:
\begin{align*}
\mathcal{H}_0(p) := \lim_{\alpha\to 0+} \mathcal{H}_\alpha(p)= \lim_{\alpha\to 0+} \frac{1}{1-\alpha}\log\left( \sum_{k=1}^{N} p_k^{\alpha}\right) = \log m.
\end{align*}
\begin{align*}
\mathcal{H}_\infty(p) :&= \lim_{\alpha\to +\infty} \mathcal{H}_\alpha(p)= \lim_{\alpha\to +\infty} \frac{1}{1-\alpha}\log\left( \sum_{k=1}^{N} p_k^{\alpha}\right) =
\\ &=
\lim_{\alpha\to +\infty} -\frac{\sum_{k=1}^{N} p_k^{\alpha}\log p_k}{\sum_{k=1}^{N} p_k^{\alpha}}
\\ &=
\lim_{\alpha\to +\infty} -\frac{N_1\log p_1+\sum_{k=N_1+1}^{N} \left(\frac{p_k}{p_1}\right)^{\alpha}\log p_k}{N_1+\sum_{k=N_1+1}^{N} \left(\frac{p_k}{p_1}\right)^{\alpha}}
\\ &=
-\log p_1.
\end{align*}
\end{remark}

\bigskip

\noindent Filipp Buryak\\  
filippburyak2000@gmail.com
\bigskip 

\noindent {\small
\noindent Taras Shevchenko National University of Kyiv\\
Department of Probability Theory, Statistics and Actuarial Mathematics\\
64, Volodymyrs’ka St., 01601 Kyiv, Ukraine
}\bigskip

\noindent Yuliya Mishura (corresponding author)\\
myus@univ.kiev.ua\\
ORCID: \url{https://orcid.org/0000-0002-6877-1800} \bigskip

\noindent {\small
\noindent Taras Shevchenko National University of Kyiv\\
Department of Probability Theory, Statistics and Actuarial Mathematics\\
64, Volodymyrs’ka St., 01601 Kyiv, Ukraine
}

\end{document}